\newtheorem{thm}{Theorem}[section]
\newtheorem{lem}[thm]{Lemma}
\newtheorem{prop}[thm]{Proposition}
\newtheorem{defn}[thm]{Definition}
\def\R{{\mathbb R}}
\def\C{{\mathbb C}}
\def\2q{{\frac{2}{|B|}}}
\newcommand{\N}{\mathbb{N}}
\newcommand{\supp}{\mbox{supp}\,}
\renewcommand{\leq}{\leqslant}
\renewcommand{\geq}{\geqslant}
\def\bra#1{{\langle{#1}\rangle}}
\newcommand{\xxi}{\langle \xi\rangle}
\newcommand{\phase}{\varphi}
\newcommand{\inverse}{\check}
\definecolor{red}{RGB}{255,0,0}
\definecolor{RED}{RGB}{255,0,0}
\definecolor{blue}{RGB}{0,0,255}
\definecolor{BLUE}{RGB}{0,0,255}
\newcommand{\abs}[1]{\left|#1\right|}
\newcommand{\set}[1]{\left\{#1\right\}}
\newcommand{\brkt}[1]{\left(#1\right)}
\newcommand{\dd}{\mathrm{d}}
\newcommand{\ddd}{\mbox{\dj}}
\renewcommand{\d}{\partial}
\newcommand{\norm}[1]{\left\Vert#1\right\Vert}
\newcommand{\p}[1]{\langle{#1}\rangle}
\begin{document}

\title[Boundedness of bilinear Fourier integral operators] {On the boundedness of certain bilinear Fourier integral operators}
\author[S.~Rodr\'iguez-L\'opez]{Salvador Rodr\'iguez-L\'opez}
\address{Department of Mathematics, Uppsala University, Uppsala, SE 75106,  Sweden}
\email{salvador@math.uu.se}
\author[D.J.~Rule]{David J.\ Rule}
\address{Department of Mathematics and the Maxwell Institute of Mathematical Sciences, Heriot-Watt University, Colin Maclaurin Building, Edinburgh, EH14 4AS, United Kingdom}
\email{rule@uchicago.edu}
\author[W.~Staubach]{Wolfgang Staubach}
\address{Department of Mathematics, Uppsala University, Uppsala, SE 75106,  Sweden}
\email{wulf@math.uu.se}
\date{\today}
\thanks{The first author has been partially supported by the Grant MTM2010-14946. The second author gratefully acknowledges support from CANPDE}
\subjclass[2000]{35S30, 42B20, 42B99.}

\begin{abstract}
We prove the global $L^2 \times L^2 \to L^1$ boundedness of bilinear Fourier integral operators with amplitudes in $S^0_{1,0} (n,2)$. To achieve this, we require that the phase function can be written as $(x,\xi,\eta) \mapsto \phase_1(x,\xi) + \phase_2(x,\eta)$ where each $\phase_j$ belongs to the class $\Phi^2$ and satisfies the strong non-degeneracy condition. This result extends that of R.~Coifman and Y.~Meyer regarding pseudodifferential operators to the case of Fourier integral operators.
\end{abstract}

\maketitle

\section{Introduction}

We study {\it{bilinear Fourier integral operators}} $T_{\sigma,\phase}$ defined to act on Schwartz functions $f$ and $g$ by the formula
\begin{equation*}\label{defn RS FIO}
T_{\sigma,\phase}(f,g)(x) = \iint \sigma(x,\xi,\eta)\widehat{f}(\xi)\widehat{g}(\eta)e^{i\phase(x,\xi,\eta)} \ddd\xi \ddd\eta,
\end{equation*}
where $\ddd$ is Lebesgue measure normalised by the factor $(2\pi)^{-n}$, so $\ddd\xi = (2\pi)^{-n}\dd\xi$ and $\ddd\eta = (2\pi)^{-n}\dd\eta$, and
\[
	\widehat{f}(\xi)=\int f(x)e^{-i x.\xi}\, \dd x
\]
is the Fourier transform of $f$.
The function $\sigma \colon \R^n\times\R^n\times\R^n \to \C$ is called the {\it{amplitude}} of $T_{\sigma,\phase}$ and $\phase\colon \R^n\times\R^n\times\R^n \to \R$ is the {\it{phase function}} or {\it{phase}}.

The study of bilinear Fourier integral operators was initiated by L.~Grafakos and M.~Peloso \cite{GP}, where the authors study the local boundedness of bilinear Fourier integral operators on Banach and quasi-Banach $L^p$ spaces.

Of interest in relation to this paper are their results concerning phases of the form
\begin{equation*}\label{defn phase}
\phase(x,\xi,\eta) = \phase_1(x,\xi) + \phase_2(x,\eta),
\end{equation*}
in which case we write $T_{\sigma,\phase} = T^{\phase_1,\phase_2}_\sigma$. They assume $\sigma \in S^m_{1,0}(n,2)$ (see Definition \ref{defn of hormander amplitudes} below) is compactly supported in the first variable, and the phase functions $\phase_j \in \mathcal{C}^\infty(\R^n \times (\R^n\setminus\{0\}))$ are homogeneous of degree 1 in the second variable and verify the {\it{non-degeneracy condition}}
\[
\left|\det\,\d^{2}_{x, \xi} \phase_{j}(x,\xi)\right|\neq 0
\]
for $j=1,2$. They then deduce that the bilinear operator $T^{\phase_1,\phase_2}_\sigma$ is bounded from $L^{q_1}\times L^{q_2} \to L^{r}$ with $\frac{1}{q_1} +\frac{1}{q_2} =\frac{1}{r}$ and $1< q_{1}, \, q_{2} <2,$ provided that the order
\[
m \leq -(n-1)\left(\left(\frac{1}{q_1}-\frac{1}{2}\right)+\left(\frac{1}{q_2}-\frac{1}{2}\right)\right).
\]

The definition of $S^m_{1,0}(n,2)$ is due to R.~Coifman and Y.~Meyer \cite{CM4}. However recalling the definition of the symbols of general linear pseudodifferential operators as in L. H\"ormander \cite{H1}, one can define a more general class of bilinear amplitudes, which has been extensively studied in A. B\'enyi, D. Maldonado, V. Naibo and R. Torres \cite{BMNT}.
\begin{defn}\label{defn of hormander amplitudes}
Let $m\in \mathbf{R}$, $0\leq \delta\leq 1$, $0\leq \rho\leq 1.$ A function $\sigma \in \mathcal{C}^{\infty}(\R^{n} \times \R^{n} \times\R^{n})$ belongs to the class $S^{m}_{\rho,\delta} (n,2)$, if for all multi-indices $\alpha, \beta$ and $\gamma$, there exist constants $C_{\alpha,\beta,\gamma}$ such that
   \begin{align*}
  |\partial_{\xi}^{\alpha}\partial_{\eta}^{\beta}\partial_{x}^{\gamma}\sigma(x,\xi,\eta)| \leq C_{\alpha,\beta,\gamma}   (1+|\xi| + |\eta|)^{m-\rho(\vert \alpha\vert + \vert \beta\vert) + \delta |\gamma|}.
   \end{align*}
\end{defn}

Now we digress slightly to recall the definition of {\it linear Fourier integral operators}, as this will provide the necessary setting for further background results and be useful in the proof of the main result. They are operators given by
\[
T_{\sigma}^{\phase}(f)(x) = \int \sigma(x,\xi)\widehat{f}(\xi)e^{i\phase(x,\xi)}\ddd\xi,
\]
for a Schwartz function $f$. The given function $\phase \colon \R^n\times\R^n \to \R$ is the phase of $T_{\sigma}^{\phase}$, and $\sigma \colon \R^n\times\R^n \to \C$ is its amplitude. In~\cite{DSFS}, D.~Dos Santos Ferreira and W.~Staubach made a systematic study of the regularity of the aforementioned linear Fourier integral operators with both smooth and rough amplitudes and demonstrated various boundedness results under appropriate conditions on the amplitudes and phases. One of the classes of amplitudes considered in~\cite{DSFS} was the H\"ormander class, which is defined as follows.
  \begin{defn}\label{defn of hormander symbols}
Let $m\in \R$, $0\leq \delta\leq 1$, $0\leq \rho\leq 1.$ A function $\sigma \in \mathcal{C}^{\infty}(\R^{n} \times\R^{n})$ belongs to the class $S^{m}_{\rho,\delta}(n,1)$, if for all multi-indices $\alpha$ and $\beta$, there exist constants $C_{\alpha,\beta}$ such that
   \begin{align*}
      |\partial_{\xi}^{\alpha}\partial_{x}^{\beta}\sigma(x,\xi)| \leq C_{\alpha,\beta} \langle\xi\rangle^{m-\rho\vert \alpha\vert + \delta |\beta|},
   \end{align*}
where the notation $\langle \cdot\rangle$ stands for $(1+|\cdot|^2)^{1/2}$.
\end{defn}
Here we would like to remark that the notation $S^{m}_{\rho,\delta}(n,1)$ is to emphasise the linearity of the operators associated to the amplitudes in $S^{m}_{\rho,\delta}(n,1)$, in contrast to the bilinear operators associated to $S^{m}_{\rho,\delta}(n,2)$ from Definition \ref{defn of hormander amplitudes}. Indeed $S^{m}_{\rho,\delta}(n,N)$ denotes the class of amplitudes corresponding to $N$-linear operators (see \cite{MRS3}).

The class of phase functions $\Phi^k$ introduced in~\cite{DSFS} will, in the case of $k=2$, play an important role in our study here. Therefore we recall the definition of these type of phases.
\begin{defn}\label{Phik phases}
A real-valued function $\phase \colon \R^n\times\R^n \to \R$ belongs to the class $\Phi^{k}$, if $\phase \in \mathcal{C}^{\infty}(\R^n \times\R^n \setminus \{0\})$, is positively homogeneous of degree $1$ in the second variable, and satisfies the following condition:
For any pair of multi-indices $\alpha$ and $\beta$, such that $|\alpha|+|\beta|\geq k$, there exists a positive constant $C_{\alpha, \beta}$ such that
   \begin{align*}
      \sup_{(x,\,\xi) \in \R^n \times\R^n \setminus \{0\}}  |\xi| ^{-1+\vert \alpha\vert}\vert \partial_{\xi}^{\alpha}\partial_{x}^{\beta}\phase(x,\xi)\vert
      \leq C_{\alpha,\beta}.
   \end{align*}
\end{defn}

It was shown in~\cite{DSFS} that it is necessary to assume the so called {\it{strong non-degeneracy}} condition on the phases, in order to guarantee global regularity. The strong non-degeneracy condition is defined as follows.

\begin{defn}[The strong non-degeneracy condition]\label{strong non-degeneracy}
A real valued phase $\phase\in \mathcal{C}^{\infty}(\R^n \times\R^n \setminus \{0\})$ satisfies the strong non-degeneracy condition, if there exists a positive constant $c$ such that
\[
\left|\det\,\d^{2}_{x, \xi} \phase_{j}(x,\xi)\right| \geq c,
\]
for all $(x,\,\xi) \in \R^n \times\R^n \setminus \{0\}$.
\end{defn}

We remark that phases in class $\Phi^2$ satisfying the strong non-degeneracy condition arise naturally in the study of hyperbolic partial differential equations, indeed a phase function closely related to that of the wave operator, namely $\phase(x,\xi)= |\xi|+ x\cdot\xi$ belongs to the class $\Phi^2$ and is strongly non-degenerate.

In the context of our current investigation, a useful result regarding the global boundedness of linear Fourier integral operators was established in Theorem 2.2.6 of \cite{DSFS}. This result, which is stated below, applies to operators with symbols belonging to the class $S^m_{\rho,\delta}(n,1)$.
\begin{thm} \label{DW226}
If $m = \min\{0,n(\rho-\delta)/2\}$, $0 \leq \rho \leq 1$, $0 \leq \delta < 1$, $\sigma \in S^m_{\rho,\delta}(n,1)$ and $\phase \in \Phi^2$ satisfies the strong non-degeneracy condition, then the operator $T_\sigma^\phase$ is bounded on $L^2(\R^n)$ and its norm is bounded by a constant depending only on $n$, $m$, $\rho$, $p$, $c$ in Definition \ref{strong non-degeneracy}, and a finite number of $C_{\alpha,\beta}$ appearing in Definitions \ref{Phik phases} and \ref{defn of hormander symbols}.
\end{thm}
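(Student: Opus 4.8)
The plan is to establish the bound by the $TT^{*}$ method, reducing $T_{\sigma}^{\phase}(T_{\sigma}^{\phase})^{*}$ to a pseudodifferential operator of the critical order. First I would make a Littlewood--Paley decomposition in $\xi$, splitting $T_{\sigma}^{\phase}$ into a low-frequency part, where $\sigma$ is supported in $\{|\xi|\lesssim 1\}$, and a high-frequency part, where $\sigma$ is supported in $\{|\xi|\geq 1\}$. For the low-frequency part one estimates the Schwartz kernel directly: it is bounded, and away from a bounded set it decays rapidly by a non-stationary phase argument in $\xi$ (using the homogeneity of $\phase$ and the $\Phi^{2}$-bounds on $\d_{\xi}^{\alpha}\d_{x}^{\beta}\phase$ with $|\alpha|+|\beta|\geq 2$), so it is $L^{2}$-bounded by Schur's test. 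From now on $\sigma$ is supported in $\{|\xi|\geq 1\}$.

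Forming $T_{\sigma}^{\phase}(T_{\sigma}^{\phase})^{*}$ and integrating out the spatial variable (which produces a Dirac mass forcing the two frequency variables to coincide), one is left with the integral operator whose kernel is
\[
K(x,x')=\int\sigma(x,\xi)\,\overline{\sigma(x',\xi)}\,e^{i(\phase(x,\xi)-\phase(x',\xi))}\,\ddd\xi.
\]
Writing $\phase(x,\xi)-\phase(x',\xi)=(x-x')\cdot\Theta(x,x',\xi)$ with $\Theta(x,x',\xi)=\int_{0}^{1}\nabla_{x}\phase(x'+t(x-x'),\xi)\,\dd t$, I would split $K$ with a smooth cutoff into a near-diagonal part, $|x-x'|\leq 2\eta_{0}$, and a far part, $|x-x'|\geq\eta_{0}$, where $\eta_{0}$ is a small constant depending only on $c$ and the $\Phi^{2}$-constant of $\d^{2}_{x,\xi}\phase$.

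On the near-diagonal part, $\d_{\xi}\Theta(x,x',\xi)=\int_{0}^{1}\d^{2}_{x,\xi}\phase(x'+t(x-x'),\xi)\,\dd t=\d^{2}_{x,\xi}\phase(x',\xi)+O(|x-x'|)$, by the $\Phi^{2}$-bound on the third derivatives of $\phase$; hence for $|x-x'|\leq 2\eta_{0}$ the map $\xi\mapsto\zeta:=\Theta(x,x',\xi)$ is a global diffeomorphism of $\R^{n}$ with Jacobian bounded above and below (the lower bound from $|\det\d^{2}_{x,\xi}\phase|\geq c$ together with $\|\d^{2}_{x,\xi}\phase\|\lesssim 1$, via Cramer's rule). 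Changing variables $\xi\leftrightarrow\zeta$ turns this part of $T_{\sigma}^{\phase}(T_{\sigma}^{\phase})^{*}$ into a pseudodifferential operator with double amplitude $a(x,x',\zeta)$, equal to the cutoff times $\sigma(x,\xi)\overline{\sigma(x',\xi)}$ times the Jacobian; the chain rule, the inequalities defining $S^{m}_{\rho,\delta}(n,1)$, and the uniform control on the change of variables give $a\in S^{2m}_{\rho,\delta}$ with seminorms dominated by finitely many of the constants $C_{\alpha,\beta}$ in Definitions \ref{Phik phases} and \ref{defn of hormander symbols}. Since $2m=\min\{0,n(\rho-\delta)\}$ is exactly the critical order for $L^{2}$-boundedness of pseudodifferential operators with amplitudes in $S^{2m}_{\rho,\delta}$ (a Calderón--Vaillancourt-type theorem), this part is $L^{2}$-bounded with the required norm control.

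On the far part the change of variables is unavailable, and this is where I expect the bulk of the work to lie. The idea is to use the strong non-degeneracy globally: applying the Hadamard--Lévy global inverse function theorem to $x\mapsto\nabla_{\xi}\phase(x,\xi/|\xi|)$, whose differential $\d^{2}_{x,\xi}\phase$ has uniformly bounded inverse, yields $|\nabla_{\xi}\phase(x,\xi)-\nabla_{\xi}\phase(x',\xi)|\geq c'|x-x'|$ for all $x,x'$ and all $\xi\neq 0$. Then repeated integration by parts in $\xi$ --- supplemented, when $\rho$ is small, by radial integration by parts (exploiting that $\phase(x,\xi)-\phase(x',\xi)$ is homogeneous of degree one in $\xi$) on the large-measure set of directions $\xi/|\xi|$ not almost orthogonal to $\nabla_{\xi}(\phase(x,\xi)-\phase(x',\xi))$, and by angular integration by parts on the complementary small-measure set --- gives $|K(x,x')|\leq C_{N}(1+|x-x'|)^{-N}$ for $|x-x'|\geq\eta_{0}$ and every $N$, so this part is $L^{2}$-bounded by Schur's test. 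Adding the three contributions proves the theorem, with the asserted dependence of the norm visible throughout. The feature that makes this global statement genuinely harder than the classical local $L^{2}$-boundedness of order-zero Fourier integral operators is precisely the absence of any decay of $\sigma$ in $x$: one cannot integrate by parts in $x$ freely, and the usual local stationary-phase reduction to a pseudodifferential operator must be replaced by the uniform estimates of $\Phi^{2}$ and the strong non-degeneracy, in particular by the global Hadamard--Lévy theorem in the far estimate.
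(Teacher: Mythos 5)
This statement is not proved in the paper; it is imported verbatim from Theorem 2.2.6 of \cite{DSFS}, so there is no internal argument here to compare with. Judged on its own terms, your $TT^{*}$ outline has a gap at what is arguably the decisive step, the near-diagonal change of variables. The map $\xi\mapsto\zeta=\Theta(x,x',\xi)$ depends on $x$ and $x'$, and its inverse $\Xi(x,x',\zeta)$ is homogeneous of degree one in $\zeta$, so $|\partial_{x}\Xi(x,x',\zeta)|\sim|\zeta|$. Differentiating the amplitude $a(x,x',\zeta)=\sigma(x,\Xi)\,\overline{\sigma(x',\Xi)}\,J$ in $x$ then produces, through the chain rule, terms such as $(\nabla_{\xi}\sigma)(x,\Xi)\cdot\partial_{x}\Xi$, of size $\langle\zeta\rangle^{m-\rho+1}$. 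For $a$ to lie in $S^{2m}_{\rho,\delta}$ one would need $1-\rho\leq\delta$, i.e.\ $\rho+\delta\geq 1$; when $\rho+\delta<1$ the change of variables only lands you in type $(\rho,1-\rho)$, and when $\rho=0$ it lands in type $(0,1)$, for which there is no Calder\'on--Vaillancourt theorem at all. This is not a peripheral corner case: every invocation of Theorem~\ref{DW226} in the present paper is for amplitudes in $S^{0}_{0,0}(n,1)$ (the multiplier $\mu$, and the $a_t$ produced by Proposition~\ref{left composition with pseudo}), so the instance your argument fails to cover is exactly the one that matters here. You also misstate the critical $\Psi$DO order as $\min\{0,n(\rho-\delta)\}$ rather than $\min\{0,n(\rho-\delta)/2\}$; since $\min\{0,n(\rho-\delta)\}\leq\min\{0,n(\rho-\delta)/2\}$ this particular slip is harmless where you use it, but it signals that the scaling needs more care.

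The far-diagonal estimate has a parallel problem for small $\rho$. Integration by parts in $\xi$ --- radial or angular --- gains $|x-x'|^{-1}$ per step, but each step also costs one $\xi$-derivative of $\sigma$, which when $\rho=0$ produces no decay in $\xi$; after any number of steps the $\xi$-integral still behaves like $\int\langle\xi\rangle^{m}\ddd\xi$, which diverges when $m=0$. So the kernel of $TT^{*}$ is not absolutely convergent for $S^{0}_{0,0}$ amplitudes, and no amount of further integration by parts will make it pointwise rapidly decaying; the scheme of separating directions almost orthogonal to $\nabla_{\xi}(\phase(x,\xi)-\phase(x',\xi))$ from the rest addresses the geometry of the phase, not the absence of $\xi$-decay in the symbol. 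Already for the special case $\phase(x,\xi)=x\cdot\xi$, $\rho=\delta=0$, $m=0$ --- which is Calder\'on--Vaillancourt's theorem itself, contained in the statement you are proving --- the standard route is a phase-space decomposition of $\sigma$ at the uncertainty scale followed by Cotlar--Stein almost orthogonality, not a global reduction of $TT^{*}$ to a critical-order $\Psi$DO by a change of variables. Some version of that almost-orthogonality machinery is what is needed to cover the full range $0\leq\rho\leq 1$, $0\leq\delta<1$ claimed by the theorem.
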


In \cite{RS}, S. Rodr\'iguez-L\'opez and W. Staubach prove the boundedness of linear Fourier integral operators with amplitudes in the class $L^pS^m_\rho(n,1)$, first introduced by N.~Michalowski, D.~Rule and W.~Staubach in~\cite{MRS3}. Here we recall the definition of the class $L^pS^m_\rho(n,1)$.
\begin{defn} \label{def5}
Let $1 \leq p \leq \infty$, $m \in \R$ and $0 \leq \rho \leq 1$ be parameters. A symbol $\sigma \colon \R^n \times \R^{n} \to \C$ belongs to the class $L^p S^m_{\rho}(n,1)$ if for each multi-index $\alpha$ there exists a constant $C_\alpha$ such that
\begin{equation*} \label{symr}
\sup_{\xi \in \R^n}\langle \xi\rangle^{-m+\rho |
\alpha|}\|\partial_\xi^\alpha \sigma(\cdot,\xi)\|_{L^p(\R^n)} \leq C_\alpha.
\end{equation*}
\end{defn}
The motivation for introducing the class $L^pS^m_\rho(n,1)$ in \cite{MRS3} is that it proves useful in the study of bilinear operators. Indeed, the global regularity of bilinear Fourier integral operators with amplitudes that are neither compactly supported nor smooth in the first variable is studied in \cite{RS} in part by proving the following linear result (which is also of direct use to us in this paper).
\begin{thm} \label{SW}
Suppose that $2 \leq p \leq \infty$ and let $r = 2p/(p+2)$. Let $\sigma \in L^pS^m_\rho(n,1)$, $\phase \in \Phi^2$ satisfy the strong non-degeneracy condition and assume $m < n(\rho - 1)/2$. Then the operator $T_\sigma^\phase$ is bounded from $L^2(\R^n)$ to $L^r(\R^n)$ and its norm is bounded by a constant depending only on $n$, $m$, $\rho$, $p$, $c$ in Definition \ref{strong non-degeneracy}, and a finite number of $C_{\alpha,\beta}$ and $C_\alpha$ appearing in Definitions \ref{Phik phases} and \ref{def5}.
\end{thm}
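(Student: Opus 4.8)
The plan is to combine a Littlewood--Paley decomposition in the frequency variable with a decomposition of each dyadic block \emph{at its own scale of $\xi$-smoothness}, so as to split off the merely $L^p$ dependence on $x$ from a clean cut-off in $\xi$, and then to invoke Theorem \ref{DW226} together with Hölder's inequality. First I would fix a Littlewood--Paley partition $1=\sum_{j\ge0}\psi_j$ with $\supp\psi_0\subset\{|\xi|\lesssim1\}$ and $\supp\psi_j\subset\{|\xi|\sim2^j\}$, write $\sigma=\sum_j\sigma_j$ with $\sigma_j:=\sigma\psi_j$, so that $T_\sigma^\phase=\sum_j T_{\sigma_j}^\phase$ and it suffices to bound each $T_{\sigma_j}^\phase$ by a quantity summable in $j$. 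Since $\sigma\in L^pS^m_\rho(n,1)$ yields $\|\partial_\xi^\alpha\sigma_j(\cdot,\xi)\|_{L^p}\lesssim 2^{j(m-\rho|\alpha|)}$, the relevant scale of $\xi$-smoothness of $\sigma_j$ is $2^{j\rho}$; I would therefore tile the annulus $\{|\xi|\sim2^j\}$ by $O(2^{jn(1-\rho)})$ essentially disjoint cubes $Q$ of side $\sim2^{j\rho}$, take a bounded partition of unity $\sum_Q\chi_Q\equiv1$ subordinate to it, each $\chi_Q$ a bump adapted to such a cube, and set $\sigma_j=\sum_Q\sigma_{j,Q}$, $\sigma_{j,Q}:=\sigma_j\chi_Q$.

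Next I would expand each $\sigma_{j,Q}(x,\cdot)$ in a Fourier series over a cube of side $\sim2^{j\rho}$ containing $\supp\chi_Q$, writing $\sigma_{j,Q}(x,\xi)=\widetilde\chi_Q(\xi)\sum_{\nu\in\mathbb Z^n}c_\nu^{Q}(x)\,e^{i2^{-j\rho}\nu\cdot\xi}$ with $\widetilde\chi_Q\equiv1$ on $\supp\chi_Q$. Because the decomposition scale matches the smoothness scale, repeated integration by parts in $\xi$ combined with the $L^pS^m_\rho$ bounds produces rapidly decaying coefficients, $\|c_\nu^{Q}\|_{L^p}\lesssim_N 2^{jm}\p{\nu}^{-N}$ for every $N$. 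Then $T_{\sigma_{j,Q}}^\phase f(x)=\sum_\nu c_\nu^{Q}(x)\,T_{\widetilde\chi_Q}^\phase f_\nu(x)$, where $f_\nu$ is the translate of $f$ with $\widehat{f_\nu}(\xi)=\widehat f(\xi)e^{i2^{-j\rho}\nu\cdot\xi}$. Writing $T_{\widetilde\chi_Q}^\phase f_\nu=T_{\mathbf 1}^\phase\big((\widetilde\chi_Q\widehat{f_\nu})^{\vee}\big)$ and noting that the amplitude identically $1$ lies in $S^0_{1,0}(n,1)$, Theorem \ref{DW226} (with $\rho=1,\ \delta=0,\ m=0$) gives $\|T_{\widetilde\chi_Q}^\phase f_\nu\|_{L^2}\lesssim\|\widetilde\chi_Q\widehat f\|_{L^2}=:a_Q$, uniformly in $\nu,j,Q$; this is the only place where the hypotheses $\phase\in\Phi^2$ and the strong non-degeneracy condition are used. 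Hölder's inequality with $\tfrac1r=\tfrac12+\tfrac1p$ then gives $\|c_\nu^{Q}T_{\widetilde\chi_Q}^\phase f_\nu\|_{L^r}\lesssim_N 2^{jm}\p{\nu}^{-N}a_Q$, and summing the absolutely convergent series in $\nu$ (legitimate since $r\ge1$) leaves $\|T_{\sigma_{j,Q}}^\phase f\|_{L^r}\lesssim 2^{jm}a_Q$.

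Finally I would sum in $Q$ and then in $j$ using $L^2$-orthogonality. Cauchy--Schwarz over the $O(2^{jn(1-\rho)})$ cubes, together with the almost-orthogonality $\sum_Q a_Q^2\lesssim\big\|\widehat f\,\mathbf 1_{\{|\xi|\sim2^j\}}\big\|_{L^2}^2$ (bounded overlap of the $\widetilde\chi_Q$), gives $\|T_{\sigma_j}^\phase f\|_{L^r}\lesssim 2^{j(m+n(1-\rho)/2)}\big\|\widehat f\,\mathbf 1_{\{|\xi|\sim2^j\}}\big\|_{L^2}$; a further Cauchy--Schwarz in $j$ and Plancherel then give $\|T_\sigma^\phase f\|_{L^r}\lesssim\big(\sum_{j\ge0}2^{2j(m+n(1-\rho)/2)}\big)^{1/2}\|f\|_{L^2}$, and the geometric series converges exactly because $m<n(\rho-1)/2$. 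The $j=0$ block is treated directly with a single cube.

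The step I expect to be the main obstacle is obtaining the \emph{sharp} order $m<n(\rho-1)/2$: a naive block decomposition (say a single Fourier series over a cube of side $2^j$, or recombining the cube-pieces and the dyadic pieces by the triangle inequality) only reaches the strictly weaker conclusion $m<n(\rho-1)$. Closing this gap forces both the choice of decomposition scale $2^{j\rho}$ — so that the coefficients $c_\nu^Q$ decay rapidly and the $\nu$-summation costs nothing — and the use of Cauchy--Schwarz together with Plancherel when recombining the $\sim2^{jn(1-\rho)}$ cube-pieces and the dyadic pieces, i.e.\ deploying the $L^2$-boundedness supplied by Theorem \ref{DW226} in an $\ell^2$-aggregated fashion rather than a lossy $\ell^1$ one.
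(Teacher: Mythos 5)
The paper states Theorem \ref{SW} as a result quoted from \cite{RS} and does not reproduce its proof, so there is no argument in the present source to compare against line by line. Judged on its own, your proposal is correct and the pieces fit together without gaps. The second dyadic decomposition into cubes of side $\sim 2^{j\rho}$ inside the annulus $|\xi|\sim 2^j$ correctly matches the $\xi$-smoothness scale of $\sigma_j$, so that $N$ integrations by parts in the Fourier series give $\|c^Q_\nu\|_{L^p}\lesssim 2^{jm}\langle\nu\rangle^{-N}$ with no residual $j$-dependence (by Minkowski's integral inequality in $x$). The device of pushing $\widetilde\chi_Q$ onto $\widehat f$ so that Theorem \ref{DW226} is applied with the constant amplitude $\mathbf 1\in S^0_{1,0}(n,1)$ is the right way to sidestep the fact that $\widetilde\chi_Q$ itself is not in $S^0_{1,0}$ uniformly when $\rho<1$; since the modulation $e^{i2^{-j\rho}\nu\cdot\xi}$ is unimodular, the resulting bound $\|T^\phase_{\widetilde\chi_Q}f_\nu\|_{L^2}\lesssim\|\widetilde\chi_Q\widehat f\|_{L^2}$ is indeed uniform in $\nu$, $j$, $Q$. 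Hölder with $\tfrac1r=\tfrac12+\tfrac1p$ and the $\ell^2$-aggregated recombination (Cauchy--Schwarz against $\#\{Q\}\sim 2^{jn(1-\rho)}$ together with bounded overlap of the $\widetilde\chi_Q$, then Cauchy--Schwarz and Plancherel in $j$) produce exactly the threshold $m<n(\rho-1)/2$. The hypothesis $p\geq 2$, hence $r\geq 1$, is used correctly and necessarily when summing the $\nu$-series by the $L^r$-triangle inequality. Your closing diagnosis is also accurate: a single Fourier series at scale $2^j$, or a raw $\ell^1$ recombination of the cube pieces, loses the factor $2^{jn(1-\rho)/2}$ and only reaches the weaker range $m<n(\rho-1)$.
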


And so, we return to our main subject of interest: bilinear Fourier integral operators. In \cite{RS} it was also shown that if $\phase_1,\phase_2\in \Phi^2$ verify the strong non-degeneracy condition, and $\sigma$ verifies the estimate
\begin{equation*}
\norm{\partial_{\xi}^{\alpha}\partial_{\eta}^{\beta} \sigma(\cdot,\xi,\eta)}_{L^{\infty}(\R^n)}\leq C_{\alpha,\beta} (1+|\xi|+|\eta|)^{m-|\alpha|-|\beta|},
\end{equation*}
then $T^{\phase_1,\phase_2}_\sigma$ is bounded from $L^{q_1} \times L^{q_2} \to L^{r}$ provided that $\frac{1}{r}= \frac{1}{q_1}+ \frac{1}{q_2}$, $1\leq q_1 ,\, q_2 \leq \infty$ and
\[
    m<-(n-1)\brkt{\abs{\frac{1}{q_1}-\frac{1}{2}}+\abs{\frac{1}{q_2}-\frac{1}{2}}}.
\]
Moreover in the case $m<0$, if the phases $\phase_{j}\in \mathcal{C}^{\infty}(\R^n \times \R^n)$ are inhomogeneous, strongly non-degenerate and verify the condition $ |\partial_{x}^{\alpha} \partial^{\beta}_{\xi} \phase_{j} (x, \xi)|\leq C_{j,\alpha,\beta}$ for $j=1,2$ and all multi-indices $\alpha $ and $\beta$ with $2\leq |\alpha|+|\beta|$,  then $T^{\phase_1,\phase_2}_\sigma$ is bounded from $L^2 \times L^2 \to L^{1}$.

The goal of this paper is to extend the results in \cite{GP} and \cite{RS} described above to the case of the end-point $m=0$, $q_1 =q_2 =2$ and $r=1$ for bilinear Fourier integral operators with strongly non-degenerate phases in class $\Phi^2$. More precisely we will prove the following theorem.
\begin{thm} \label{main}
Suppose that $\sigma \in S^0_{1,0}(n,2)$ and $\phase_1,\phase_2 \in \Phi^2$ satisfy the strong non-dengeneracy condition. Then there exists a constant $C$ such that
\[
\|T_\sigma^{\phase_1,\phase_2}(f,g)\|_{L^1(\R^n)} \leq C\|f\|_{L^2(\R^n)}\|g\|_{L^2(\R^n)}
\]
for all Schwartz functions $f$ and $g$.
\end{thm}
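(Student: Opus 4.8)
The natural strategy is to decouple the two frequency variables by a Littlewood--Paley–type decomposition adapted to the bilinear symbol, reducing the endpoint $L^2\times L^2\to L^1$ estimate to a square-function bound whose pieces are controlled by the \emph{linear} Fourier integral operator theory recalled in Theorems \ref{DW226} and \ref{SW}. I would begin by choosing a dyadic partition of unity $1=\sum_{j\geq 0}\psi_j(\xi)$ and $1=\sum_{k\geq 0}\psi_k(\eta)$ and writing $\sigma=\sum_{j,k}\sigma_{j,k}$, where $\sigma_{j,k}(x,\xi,\eta)=\sigma(x,\xi,\eta)\psi_j(\xi)\psi_k(\eta)$. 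Since $\sigma\in S^0_{1,0}(n,2)$, on the support of $\sigma_{j,k}$ one has $|\xi|\sim 2^j$, $|\eta|\sim 2^k$ and $1+|\xi|+|\eta|\sim 2^{\max(j,k)}$, so each $\sigma_{j,k}$ behaves like a symbol of order $0$ at scale $2^{\max(j,k)}$. By symmetry in the two arguments it suffices to treat the "diagonal-and-below" region $k\leq j$; in that regime the $\eta$-localisation is harmless and $\sigma_{j,k}$, as a function of $(x,\xi)$ with $\eta$ frozen, lies (uniformly) in a bounded subset of $S^0_{1,0}(n,1)$ supported where $|\xi|\sim 2^j$.

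The key step is then to peel off the $g$-factor. For fixed $j$, sum over $k\le j$ to form $\sigma_j(x,\xi,\eta)=\sum_{k\le j}\sigma_{j,k}$ and consider
\[
T^{\phase_1,\phase_2}_{\sigma_j}(f,g)(x)=\int \psi_j(\xi)\Big(\int \sigma_j(x,\xi,\eta)\widehat{g}(\eta)e^{i\phase_2(x,\eta)}\ddd\eta\Big)\widehat{f}(\xi)e^{i\phase_1(x,\xi)}\ddd\xi.
\]
The inner integral is, for each fixed $(x,\xi)$, a \emph{linear} Fourier integral operator in $g$ with phase $\phase_2\in\Phi^2$ strongly non-degenerate and amplitude of order $0$; applying Theorem \ref{SW} with $p=\infty$, $r=2$ (or rather its $L^\infty$-symbol version, which gives $L^2\to L^2$ after using $m<0$ on a slightly retracted piece, handled by summing a geometric series in $k$) shows the inner integral defines, for each $\xi$, a function $G_j(x,\xi)$ with $\|G_j(\cdot,\xi)\|$ controlled in an appropriate mixed norm by $\|g\|_{L^2}$. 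The resulting object $x\mapsto \int \psi_j(\xi)G_j(x,\xi)\widehat f(\xi)e^{i\phase_1(x,\xi)}\ddd\xi$ is a linear FIO in $f$ with amplitude $\psi_j(\xi)G_j(x,\xi)$; crucially this amplitude lies in $L^2 S^0_1(n,1)$ in the $x$-variable (the $L^2(\R^n_x)$ norm being $\lesssim\|g\|_{L^2}$), so Theorem \ref{SW} with $p=2$, $r=1$, $m=0=n(\rho-1)/2$ when $\rho=1$ applies and yields $\|T^{\phase_1,\phase_2}_{\sigma_j}(f,g)\|_{L^1}\lesssim \|P_jf\|_{L^2}\|g\|_{L^2}$, where $P_j$ is the Littlewood--Paley projection onto $|\xi|\sim2^j$.

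Finally one sums in $j$. Using the almost-orthogonality of the $P_jf$ together with Cauchy--Schwarz,
\[
\sum_j\|P_jf\|_{L^2}\|g\|_{L^2}\quad\text{is \emph{not} directly summable},
\]
so the naive sum fails and one must instead keep the $j$-sum inside an $\ell^2$ square function: the correct formulation replaces the crude $L^1$ bound on each piece by an $L^1(\ell^2_j)$ bound, i.e. one shows $\big\|(\sum_j|T^{\phase_1,\phase_2}_{\sigma_j}(f,g)|^2)^{1/2}\big\|_{L^1}\lesssim\|f\|_{L^2}\|g\|_{L^2}$, after which $\|T_\sigma^{\phase_1,\phase_2}(f,g)\|_{L^1}\le\|(\sum_j|\cdots|^2)^{1/2}\|_{L^1}$ trivially and $\sum_j\|P_jf\|_{L^2}^2=\|f\|_{L^2}^2$ closes the estimate. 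Establishing this vector-valued refinement is the main obstacle: it requires a vector-valued version of the $L^2\to L^1$ FIO estimate of Theorem \ref{SW}, which in turn rests on the $L^2\to L^2$ boundedness with values in $\ell^2$ of the localised operators (available from Theorem \ref{DW226} via a standard Fefferman--Stein / Cotlar--Stein argument applied uniformly in $j$) combined with the Cauchy--Schwarz step $\|Fh\|_{L^1}\le\|F\|_{L^2}\|h\|_{L^2}$ exploited slot-by-slot. I expect the technical heart of the paper to be making this uniformity in $j$ genuinely quantitative — controlling the finitely many seminorms of $\sigma_j$ (and of the intermediate amplitude $\psi_j G_j$) by the seminorms of $\sigma$ independently of $j$, and verifying that the strong non-degeneracy constants for $\phase_1,\phase_2$ are not degraded by the dyadic localisation.
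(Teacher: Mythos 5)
Your plan contains a genuine gap at the critical step. You invoke Theorem \ref{SW} ``with $p=2$, $r=1$, $m=0=n(\rho-1)/2$ when $\rho=1$,'' but Theorem \ref{SW} requires the \emph{strict} inequality $m<n(\rho-1)/2$, so with $\rho=1$ it only covers $m<0$. The endpoint $m=0$ is precisely what is open, and it cannot be reached by applying Theorem \ref{SW} to dyadic pieces: each piece has amplitude of order $0$ at its own scale and gains nothing summable in $j$ (as you yourself observe). The vector-valued remedy you sketch does not close this gap either, because the failure is not merely a matter of rearranging the $j$-sum into an $\ell^2$ square function — the one-scale estimate itself has a logarithmic loss at $m=0$, and no Cotlar--Stein or Fefferman--Stein uniformity argument built only on Theorems \ref{DW226} and \ref{SW} recovers it. Some genuinely new cancellation input is required at the endpoint.

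The paper's proof does use an iteration-of-linear-operators argument similar in spirit to yours, but only in the easy regions where one frequency is bounded (there the order can be lowered below $0$, and Theorem \ref{SW} applies). The hard region, where both $|\xi|$ and $|\eta|$ are large and comparable, is handled by a completely different mechanism: a continuous Coifman--Meyer decomposition $\sigma_1(x,\xi,\eta)=\int_0^\infty \sigma_{1,t}(x,t\xi,t\eta)\widehat\psi(t\xi)\widehat\theta(t\eta)\,\frac{\dd t}{t}$ reduces matters to a quadratic estimate of the form \eqref{aim} for the operators $Q^u_t T^{\phase_2,*}_{\mu} M^{u,v}_t T^{\phase_1}_{\mu} P^v_t$, and this quadratic estimate is proved by combining the quadratic $T(1)$-theorem of Christ and Journ\'e (Theorem \ref{tbglobal}) with a sequence of commutator estimates (Theorem \ref{commutator}, whose proof occupies most of Section \ref{proofoflemma}) that move the Fourier integral operators $T^{\phase_j}_{\mu}$ past the Littlewood--Paley pieces at the cost of errors of size $t^{\eps}$. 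The $T(1)$ condition here is the Carleson-measure property of $R_t(b)$ for $b\in L^\infty\subset\mathrm{BMO}$, and this Carleson-measure/$T(1)$ input is exactly the extra cancellation that makes the $m=0$ endpoint accessible. That ingredient is absent from your plan, and without something playing its role the proposal cannot succeed.
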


The methods used to prove Theorem \ref{main} are significantly different from those employeed in \cite{GP} and \cite{RS}. As is frequently done, we apply different methods in different frequency regimes. The first is when either $\xi$ or $\eta$ is small. In this case we can write the bilinear operator as an iteration of linear operators, as in \cite{MRS3}, and use known results for linear operators. The second is when both $\xi$ and $\eta$ are large. Due to the symmetry of the operator and the fact we are interested in a bound in terms of the $L^2$-norm of both $f$ and $g$, we can further reduce this case to when $|\eta| \leq 2|\xi|$. Here we use a decomposition introduced in \cite{CM4} for the proof of the corresponding result for pseudodifferential operators. However, where they go on to use Carleson measure techniques, we must combine a quadratic $T(1)$-Theorem of M.~Christ and J.-L.~Journ\'e \cite{CJ} with commutator-type estimates.

To keep the notation as simple as possible, constants which can be easily estimated by given parameters are all denoted by $C$, even though the precise values will vary from from line to line. We also use the notation $A \lesssim B$, if there exists a constant $C$ such that $A\leq C B$. For clarity, we sometimes indicate the parameters on which a constant depends as subscripts.

\section{The Proof of Theorem \ref{main}}

We introduce a smooth function $\mu \colon \R^n \to \R$ such that $\mu(\xi) = 0$ for $|\xi| \leq 5$ and $\mu(\xi) = 1$ for $|\xi| \geq 6$. Observe that
\begin{equation}\label{eq:first_step}
\begin{aligned}
	T_{\sigma}^{\phase_1,\phase_2}(f,g) & = T_{\sigma}^{\phase_1,\phase_2}(\mu(D)f,\mu(D)g)+T_{\sigma}^{\phase_1,\phase_2}(\mu(D)f,(1-\mu)(D)g)\\
& \qquad +T_{\sigma}^{\phase_1,\phase_2}((1-\mu)(D) f,g),
\end{aligned}
\end{equation}
where $\mu(D)$ and $(1-\mu)(D)$ denote the Fourier multiplier operators given by $(\mu(D)f)\widehat{\,\,\,}(\xi)=\mu(\xi) \widehat{f}(\xi)$ and $((1-\mu)(D)f)\widehat{\,\,\,}(\xi)=(1-\mu)(\xi) \widehat{f}(\xi)$ respectively.

To estimate the $L^1$-norm of the last two terms in \eqref{eq:first_step}  we can make use of linear boundedness results by viewing the bilinear operator as an iteration of linear operators. We can write $T_{\sigma}^{\phase_1,\phase_2}((1-\mu)(D)f,g)$  as
\begin{align*}
T_{\sigma}^{\phase_1,\phase_2}((1-\mu)(D)f,g)(x) & = \int\left(\int \sigma(x,\xi,\eta)\widehat{g}(\eta)e^{i\phase_2(x,\eta)} \ddd\eta\right)(1-\mu)(\xi)\widehat{f}(\xi)e^{i\phase_1(x,\xi)} \ddd\xi\\
& = \int \mathfrak{a}_g(x,\xi) \widehat{f}(\xi)e^{i\phase_1(x,\xi)} \ddd\xi= T^{\phase_1}_{\mathfrak{a}_g}(f)(x),
\end{align*}
where
\[
\mathfrak{a}_g(x,\xi) = (1-\mu)(\xi)\int \sigma(x,\xi,\eta)\widehat{g}(\eta)e^{i\phase_2(x,\eta)} \ddd\eta.
\]
Using the fact that $\sigma \in S^0_{1,0}(n,2)\subset S^0_{0,0}(n,2)$ we get
\[
	\sup_{x,\xi,\eta\in \R^n}|\partial^\alpha_\xi\partial^\beta_\eta\partial^\gamma_x\sigma(x,\xi,\eta)| \lesssim 1.
\]
Therefore, bearing in mind the support properties of $\mathfrak{a}_g$, applying Theorem \ref{DW226} we find that
\[
	\sup_{\xi\in \R^n} \langle \xi\rangle^{-m+|\alpha|}\|\partial_\xi^\alpha \mathfrak{a}_g(\cdot,\xi)\|_{L^2(\R^n)} \lesssim \|g\|_{L^2(\R^n)}
\]
for any $m<0$, and so $\mathfrak{a}_g \in L^2S^m_1(n,1)$. Fixing $m < 0$ and applying Theorem \ref{SW} we see that $T^{\phase_1}_{\mathfrak{a}_g}$ is bounded from $L^2(\R^n)$ to $L^1(\R^n)$ with norm of size $\|g\|_{L^2(\R^n)}$. Consequently,
\[
\|T_{\sigma}^{\phase_1,\phase_2}((1-\mu)(D)f, g)\|_{L^1(\R^n)} \lesssim \|f\|_{L^2(\R^n)}\|g\|_{L^2(\R^n)}.
\]

Similarly, interchanging the roles of $f$ and $g$ in the previous argument, we can see that
\[
	\|T_{\sigma}^{\phase_1,\phase_2}(\mu(D)f,(1-\mu)(D)g)\|_{L^1(\R^n)} \lesssim \|f\|_{L^2(\R^n)}\|g\|_{L^2(\R^n)}.
\]

We now turn our attention to $T_{\sigma}^{\phase_1,\phase_2}(\mu(D)f,\mu(D)g)$. Let us introduce two smooth cut-off function $\chi,\nu \colon \R^{2n} \to \R$, such that $\chi(\xi,\eta) = 1$ for $|(\xi,\eta)| \leq 1$ and $\chi(\xi,\eta) = 0$ for $|(\xi,\eta)| \geq 2$, and $\nu(\xi,\eta) = 0$ for $2|\xi| \leq |\eta|$ and $\nu(\xi,\eta) = 1$ for $2|\eta| \leq |\xi|$.

Defining
\begin{align*}
\sigma_0(x,\xi,\eta) & =\chi(\xi,\eta)\sigma(x,\xi,\eta), \\
\sigma_1(x,\xi,\eta) & = (1-\chi(\xi,\eta))\nu(\xi,\eta)\sigma(x,\xi,\eta) \quad \text{and} \\
\sigma_2(x,\xi,\eta) & = (1-\chi(\xi,\eta))(1-\nu(\xi,\eta))\sigma(x,\xi,\eta),
\end{align*}
we have that $\sigma_0,\sigma_1,\sigma_2$ belong to the class $S^0_{1,0}(n,2)$ and we can decompose
\begin{equation} \label{low-high}
\begin{aligned}
T_{\sigma}^{\phase_1,\phase_2}(\mu(D)f,\mu(D)g)& = T_{\sigma_1}^{\phase_1,\phase_2}(\mu(D)f,\mu(D)g)+ T_{\sigma_2}^{\phase_1,\phase_2}(\mu(D)f,\mu(D)g).
\end{aligned}
\end{equation}
We observe that it suffices to control the $L^1$-norm of merely one of these terms, say $T_{\sigma_1}^{\phase_1,\phase_2}(\mu(D)f,\mu(D)g)$, because once again the other can be controlled in the same way by interchanging the roles of $f$ and $g$.

Following the analysis on pages 154--155 of \cite{CM4} we introduce an even real-valued smooth function $\psi$ whose Fourier transform is supported on the annulus $\{\xi \, | \, 1/2 \leq |\xi| \leq 2\}$ such that
\[
\int_0^\infty |\widehat{\psi}(t\xi)|^2 \frac{\dd t}{t} = 1
\]
for $\xi \neq 0$. Let $\theta$ be another real-valued smooth function whose Fourier transform is equal to one on the ball $\{\xi \, | \, |\xi| \leq 4\}$ and supported in $\{\xi \, | \, |\xi| \leq 5\}$. Then
\begin{equation} \label{rep1}
\begin{aligned}
& T_{\sigma_1}^{\phase_1,\phase_2}(\mu(D)f,\mu(D)g)(x)\\
& = \iiint_0^\infty\ \sigma_{1,t}(x,t\xi,t\eta)\widehat{\psi}(t\xi)\widehat{\theta}(t\eta)\mu(\xi)\widehat{f}(\xi)\mu(\eta)\widehat{g}(\eta)e^{i\phase_1(x,\xi)+i\phase_2(x,\eta)}  \frac{\dd t}{t}\ddd\xi\ddd \eta,
\end{aligned}
\end{equation}
for $\sigma_{1,t}(x,\xi/t,\eta/t) := \sigma_1(x,\xi/t,\eta/t)\widehat{\psi}(\xi)\widehat{\theta}(\eta)$. Using the Fourier inversion formula,
\begin{equation} \label{rep2}
\sigma_{1,t}(x,\xi,\eta) = \iint e^{i\xi\cdot u + i\eta\cdot v}m(t,x,u,v) \frac{\dd u \dd v}{(1 + |u|^2 + |v|^2)^N}
\end{equation}
where
\[
m(t,x,u,v) := \iint e^{-i\xi\cdot u - i\eta\cdot v} \big[(1-\Delta_\xi-\Delta_\eta)^N\sigma_{1,t}(x,\xi,\eta)\big] \ddd\xi\ddd \eta
\]
for any large fixed $N \in \N$.

Since the $(\xi,\eta)$-support of $\sigma_{1,t}$ is contained in a compact set independent of $t$ and $x$, and all $(x,\xi,\eta)$-derivatives are bounded independently of $t$, we see that $\partial^\alpha_xm(t,x,u,v)$ is bounded for each multi-index $\alpha$. Combining this with \eqref{rep1} and \eqref{rep2} we arrive at the representation
\begin{equation} \label{three}
\begin{aligned}
& T_{\sigma_1}^{\phase_1,\phase_2}(\mu(D)f,\mu(D)g)(x) \\
& = \iiint_0^\infty T^{\phase_1}_{\mu}(P^v_t(f))(x) T^{\phase_2}_{\mu}(Q^u_t(g))(x) \frac{m(t,x,u,v)}{(1+|u|^2+|v|^2)^N} \frac{\dd t}{t} \dd u \dd v
\end{aligned}
\end{equation}
for any large fixed $N \in \N$. Here
\begin{equation*}
T_{\mu}^{\phase_j}(f)(x) = \int \mu(\xi)\widehat{f}(\xi)e^{i\phase_j(x,\xi)} \ddd\xi\quad \text{for $j=1,2$,}
\end{equation*}
$P^v_t(f) = \theta_t^v * f$ and $Q^u_t(g) = \psi_t^u * g$, with $\theta_t^v(x) = t^{-n}\theta(x/t + v)$ and $\psi_t^u(x) = t^{-n}\psi(x/t + u)$.

We observe that
\[
T^{\phase_1}_{\mu}(P^v_t(f))(x) = \int \mu(\xi)\widehat{\theta}(t\xi)e^{it\xi\cdot v}\widehat{f}(\xi)e^{i\phase_1(x,\xi)}\ddd\xi.
\]
Since $\mu(\xi) = 0$ for $|\xi| \leq 5$ and $\widehat{\theta}(t\xi) = 0$ for $|t\xi| \geq 5$, then $\mu(\xi)\widehat{\theta}(t\xi) = 0$ for $t > 1$ and consequently $T^{\phase_1}_{\mu}(P^v_t(f))(x) = 0$ for $t > 1$. Using this fact, together with \eqref{three} and duality, to bound $\|T_{\sigma_1}^{\phase_1,\phase_2}(f,g)\|_{L^1(\R^n)}$ it suffices to control
\[
\left| \iint_0^1 T^{\phase_1}_{\mu}(P^v_t(f))(x) T^{\phase_2}_{\mu}(Q^u_t(g))(x) b(x) m(t,x,u,v) \frac{\dd t \dd x}{t}\right|
\]
with at most polynomial growth in $u$ and $v$ for arbitrary $b \in L^\infty(\R^n)$. Introducing the radial function $\psi_0$ whose Fourier transform is compactly supported on an annulus and equal to one on the support of the Fourier transform of $\psi$, we define $\psi_{0,t}(x) = t^{-n}\psi_0(x/t)$ and $Q_{0,t}(g) = \psi_{0,t} * g$. Let $M^{u,v}_t$ denote the $L^2$-bounded operator which is mulitiplication by $b(x) m(t,x,u,v)$, that is $M^{u,v}_t(f)(x) = b(x) m(t,x,u,v)f(x)$, and let $T^{\phase_2,*}_{\mu}$ denote the adjoint operator of $T^{\phase_2}_{\mu}$. Then $Q^u_t(g) = Q^u_t(Q_{0,t}(g))$ and so, using the Cauchy-Schwarz inequality,
\begin{align*}
& \left|\iint_0^1 T^{\phase_1}_{\mu}(P^v_t(f))(x) T^{\phase_2}_{\mu}(Q^u_t(g))(x) b(x)m(t,x,u,v) \frac{\dd t \dd x}{t}\right| \\
& = \left|\iint_0^1 (Q^u_tT^{\phase_2,*}_{\mu}M^{u,v}_t T^{\phase_1}_{\mu}P^v_t)(f)(x) Q_{0,t}(g)(x) \frac{\dd t \dd x}{t}\right| \\
& \leq \left(\iint_0^1 |(Q^u_tT^{\phase_2,*}_{\mu}M^{u,v}_t T^{\phase_1}_{\mu}P^v_t)(f)(x)|^2 \frac{\dd t \dd x}{t}\right)^{1/2} \left(\iint_0^\infty |Q_{0,t}(g)(x)|^2 \frac{\dd t \dd x}{t} \right)^{1/2} \\
& \lesssim \left(\iint_0^1 |(Q^u_tT^{\phase_2,*}_{\mu}M^{u,v}_t T^{\phase_1}_{\mu}P^v_t)(f)(x)|^2 \frac{\dd t \dd x}{t}\right)^{1/2} \|g\|_{L^2(\R^n)}.
\end{align*}
The last inequality follows by repeated application of Plancherel's Theorem and the fact that $\psi_0$ is supported on an annulus. Indeed,
\begin{equation} \label{plancherel}
\begin{aligned}
\left(\iint_0^\infty |Q_{0,t}(g)(x)|^2 \frac{\dd t \dd x}{t} \right)^{1/2} & = \left(\int_0^\infty\!\!\!\!\int  |\widehat{\psi}_0(t\xi)\widehat{g}(\xi)|^2 \frac{\ddd\xi\dd t}{t} \right)^{1/2} \\
& = \left(\iint_0^\infty |\widehat{\psi}_0(t\xi)|^2 \frac{\dd t}{t} |\widehat{g}(\xi)|^2 \ddd\xi\right)^{1/2} \\
& \lesssim \left(\int |\widehat{g}(\xi)|^2 \ddd\xi\right)^{1/2} = \left(\int |g(x)|^2 \dd x \right)^{1/2}.
\end{aligned}
\end{equation}
Therefore, the proof of Theorem \ref{main} will be complete if we can prove the quadratic estimate
\begin{equation} \label{aim}
\left(\iint_0^1 |(Q^u_tT^{\phase_2,*}_{\mu}M^{u,v}_t T^{\phase_1}_{\mu}P^v_t)(f)(x)|^2 \frac{\dd t \dd x}{t}\right)^{1/2} \lesssim \|b\|_{L^\infty(\R^n)}\|f\|_{L^2(\R^n)}.
\end{equation}

In what follows we will use the $L^2$-boundedness of the operators
$T^{\phase_1}_{\mu}$ and $T^{\phase_2}_{\mu}$, which is guaranteed by Theorem \ref{DW226} since $\mu\in S^0_{0,0}(n,1)$ and $\varphi_1,\varphi_2$ are strongly non-degenerate phases in $\Phi^2$.

To obtain \eqref{aim}, we wish to apply a quadratic $T(1)$-theorem. This requires two hypotheses: kernel estimates; and a cancellation condition (the $T(1)$ condition). Unfortunately it is not clear how to demonstrate either of these hypotheses for the operator $Q^u_tT^{\phase_2,*}_{\mu}M^{u,v}_t T^{\phase_1}_{\mu}P^v_t$ which appears in \eqref{aim}. However, let us suppose for a moment that we could commute operators at will. Define the operators $M_b$ and $M_{m}$ by $M_b(f)(x) = b(x)f(x)$ and $M_m(f)(x) = m(t,x,u,v)f(x)$, respectively. Because $M^{u,v}_t = M_mM_b$, commuting operators would lead us to consider $T^{\phase_2,*}_{\mu}M_mQ^u_tM_b P^v_tT^{\phase_1}_{\mu}$. This composition is much more amenable to the method mentioned above. This is because $T^{\phase_2,*}_{\mu}M_m$ is bounded on $L^2(\R^n)$, uniformly in $t$ and appears on the left of the composition, so can be disregarded when trying to prove a quadratic estimate. Furthermore $T^{\phase_1}_{\mu}$ is $t$-independent, $L^2$-bounded and appears on the right of the composition, so can also be disregarded. This leaves us with the task of proving a quadratic estimate for $Q^u_tM_b P^v_t$. This operator does satisfy kernel estimates and the $T(1)$ condition in this context is simply the fact that $Q^u_tM_b P^v_t(1) = Q^u_t(b)$ gives rise to a Carleson measure, which is well-known. For the remainder of the paper, we will fill in the details of this heuristic argument.

The following theorem gives us three equalities. The first makes precise the extent to which we may commute $T^{\phase_1}_{\mu}$ and $P^v_t$. Although we cannot commute $Q^u_t$ and $T^{\phase_2,*}_{\mu}$ in a manner which is acceptable to us, the second equality says there exist operators $U^u_t$ and $R_t$ such that $Q^u_tT^{\phase_2,*}_{\mu} = U_t^*R_t$ modulo an acceptable error, where $R_t$ has the same properties as $Q^u_t$ and $U^u_t$ is an operator bounded uniformly in $t$. The third equality shows us that the commutator of $R_t$ and $M_m$ is sufficiently well-behaved.
\begin{thm} \label{commutator}
For $0 < t \leq 1$, $u,v\in \R^n$, there exist operators $W^v_{1,t}$, $W^u_{2,t}$, $W^{u,v}_{3,t}$, $U^u_t$, $V^v_t$ and a radial smooth function $\rho$ supported in an annulus centred at the origin such that
\begin{enumerate}[label={\upshape (\roman*)}]

\item \label{part1}$[T^{\phase_1}_{\mu},P^v_t] := T^{\phase_1}_{\mu}P^v_t - P^v_tT^{\phase_1}_{\mu} = V^v_t + W^v_{1,t}$, and

\item \label{part2}$T^{\phase_2}_{\mu}Q^u_t = W^u_{2,t} + R_tU^u_t$,

\item \label{part3}$[R_t,M_m] := R_tM_m - M_mR_t = W^{u,v}_{3,t}$

\end{enumerate}
where $R_t$ is the multiplier operator defined by
\[
	R_t(f) = \int \rho(t\xi)\widehat{f}(\xi)e^{ix\cdot\xi}\ddd\xi.
\]
Moreover, there exists $\varepsilon>0$ such that for any $0 < t \leq 1$
\begin{align}
\|W^v_{1,t}(f)\|_{L^2(\R^n)} + \|W^u_{2,t}(f)\|_{L^2(\R^n)} + \|W^{u,v}_{3,t}(f)\|_{L^2(\R^n)} & \lesssim t^\varepsilon\|f\|_{L^2(\R^n)}, \notag \\
\|U^u_t(f)\|_{L^2(\R^n)} & \lesssim \|f\|_{L^2(\R^n)} \quad \text{and} \notag \\
\label{rq}\iint_0^1 |V^v_t(f)(x)|^2 \frac{\dd t}{t} \dd x & \lesssim \|f\|_{L^2(\R^n)}^2.
\end{align}
The implicit constants here depend on $\varepsilon$, but not on $t$ and only polynomially on $u$ and $v$.
\end{thm}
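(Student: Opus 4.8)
The plan is to stay inside the oscillatory--integral calculus for phases in $\Phi^2$. The basic observation is that composing with the (modulated) Littlewood--Paley pieces only changes amplitudes: a direct Fourier--transform computation gives
\[
T^{\phase_1}_\mu P^v_t=T^{\phase_1}_{a^v_t},\ \ a^v_t(x,\xi)=\mu(\xi)\widehat\theta(t\xi)e^{itv\cdot\xi};\qquad
P^v_tT^{\phase_1}_\mu=T^{\phase_1}_{\mu b^v_t},\ \ b^v_t(x,\xi)=\int\theta(s)e^{i(\phase_1(x-t(s-v),\xi)-\phase_1(x,\xi))}\dd s,
\]
and likewise $T^{\phase_2}_\mu Q^u_t=T^{\phase_2}_{\mu\widetilde a^u_t}$ with $\widetilde a^u_t(x,\xi)=\widehat\psi(t\xi)e^{itu\cdot\xi}$ (the interchange of integrals being justified by a standard regularisation). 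I shall also use that a strongly non-degenerate $\phase_j\in\Phi^2$ obeys $c_0|\xi|\le|\nabla_x\phase_j(x,\xi)|\le c_1|\xi|$ uniformly in $x$ --- since $\nabla_x\phase_j(x,\cdot)$ is positively homogeneous of degree one, Euler's identity gives $(\d^2_{x\xi}\phase_j)\xi=\nabla_x\phase_j$, and $|\d^2_{x\xi}\phase_j|\lesssim1$ together with $|\det\d^2_{x\xi}\phase_j|\ge c$ control both this matrix and its inverse --- so that the canonical relation of $T^{\phase_j}$ preserves dyadic frequency scales and composing such an FIO with a frequency projection onto a disjoint scale is smoothing of infinite order with operator norm $\lesssim_N t^N$.

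For \ref{part1}, subtracting the first two identities gives $[T^{\phase_1}_\mu,P^v_t]=T^{\phase_1}_{c^v_t}$ where, writing $\widehat\theta(t\xi)e^{itv\cdot\xi}=\int\theta(s)e^{-it(s-v)\cdot\xi}\dd s$,
\[
c^v_t(x,\xi)=\mu(\xi)\int\theta(s)\bigl(e^{-it(s-v)\cdot\xi}-e^{i(\phase_1(x-t(s-v),\xi)-\phase_1(x,\xi))}\bigr)\dd s .
\]
Taylor expanding, $\phase_1(x-t(s-v),\xi)-\phase_1(x,\xi)=-t(s-v)\cdot\nabla_x\phase_1(x,\xi)+r_t(x,s,v,\xi)$, and the $\Phi^2$ bounds $|\d^\gamma_x\d^\alpha_\xi\phase_1|\lesssim|\xi|^{1-|\alpha|}$ for $|\alpha|+|\gamma|\ge2$ make $r_t$ and all its $x,\xi$-derivatives $O(t\langle s-v\rangle^2|\xi|^{1-|\alpha|})$; since the zeroth- and first-order Taylor terms cancel, one obtains on $\{|t\xi|\lesssim1\}$ the crucial gain $|\d^\gamma_x\d^\alpha_\xi c^v_t(x,\xi)|\lesssim\langle v\rangle^{K}\min(1,t|\xi|)\langle\xi\rangle^{-|\alpha|}$. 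Fix $\eta_0\in C_c^\infty$ with $\eta_0\equiv1$ on $\{|\cdot|\le C_\ast+1\}$, $\supp\eta_0\subset\{|\cdot|\le C_\ast+2\}$, $C_\ast:=\max(5,5/c_0)$, and set $V^v_t:=T^{\phase_1}_{c^v_t\eta_0(t\xi)}$ and $W^v_{1,t}:=T^{\phase_1}_{c^v_t(1-\eta_0(t\xi))}$. On $\supp(1-\eta_0(t\,\cdot))$ one has $\widehat\theta(t\xi)=0$ and $|t\nabla_x\phase_1|>5$, so there $c^v_t=-\mu(\xi)b^v_t$, and, the linear-phase part integrating to $\widehat\theta(t\nabla_x\phase_1)=0$, in fact $b^v_t=\int\theta(s)e^{-it(s-v)\cdot\nabla_x\phase_1}(e^{ir_t}-1)\dd s$; this is the Fourier transform in $s$ of the Schwartz function $\theta(\cdot)(e^{ir_t(\cdot)}-1)$ evaluated at the point $t\nabla_x\phase_1(x,\xi)$ of size $\gtrsim t|\xi|\ge C_\ast+1$, so the rapid decay of that Fourier transform (using $|\xi|\gtrsim1/t$ to beat the growth of its seminorms) makes $\mu(\xi)b^v_t(1-\eta_0(t\xi))$ and all its derivatives $O(t\langle v\rangle^K)$, whence $\|W^v_{1,t}\|_{L^2\to L^2}\lesssim t\langle v\rangle^K$ by Theorem \ref{DW226}. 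For \eqref{rq} write $f=\sum_k\Delta_kf$ dyadically; the amplitude of $V^v_t$ vanishes for $|\xi|>(C_\ast+2)/t$ and $T^{\phase_1}$ preserves scales, so $V^v_t\Delta_k=0$ unless $2^k\lesssim1/t$, then $\|V^v_t\Delta_kf\|_{L^2}\lesssim\langle v\rangle^K\min(1,t2^k)\|\Delta_kf\|_{L^2}$ by Theorem \ref{DW226}, and the $V^v_t\Delta_kf$ have boundedly overlapping Fourier supports; therefore
\[
\iint_0^1|V^v_tf(x)|^2\frac{\dd t}{t}\dd x\lesssim\langle v\rangle^{2K}\sum_k\|\Delta_kf\|_{L^2}^2\int_0^{C2^{-k}}\min(1,t2^k)^2\frac{\dd t}{t}\lesssim\langle v\rangle^{2K}\|f\|_{L^2}^2 ,
\]
since $\int_0^C\min(1,s)^2\,\dd s/s\lesssim1$.

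For \ref{part2}, put $U^u_t:=T^{\phase_2}_\mu Q^u_t=T^{\phase_2}_{\mu\widetilde a^u_t}$; on its support $|t\xi|\simeq1$ and $t\le1$, so its amplitude lies in $S^0_{1,0}(n,1)$ uniformly with seminorms polynomial in $u$, and Theorem \ref{DW226} gives $\|U^u_t\|_{L^2\to L^2}\lesssim\langle u\rangle^K$. Because $\widehat\psi(t\xi)$ localises the input to $|\xi|\simeq1/t$ and $|\nabla_x\phase_2|\simeq|\xi|$, non-stationary phase in the $x$-integral of $\widehat{e^{i\phase_2(\cdot,\xi)}}(\zeta)$ shows that the Fourier transform of $U^u_tf$ lives in a fixed annulus $\{c_3/t\le|\zeta|\le c_4/t\}$ up to an $L^2$-error $\lesssim_M t^M\langle u\rangle^K\|f\|_{L^2}$. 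Take a radial $\rho\in C_c^\infty$ equal to $1$ on that annulus, with $\supp\rho$ a slightly larger annulus (independent of $t,u,v$), let $R_t$ be the associated multiplier and $W^u_{2,t}:=(I-R_t)U^u_t$; then $T^{\phase_2}_\mu Q^u_t=W^u_{2,t}+R_tU^u_t$ and $\|W^u_{2,t}\|_{L^2\to L^2}\lesssim t\langle u\rangle^K$. Finally, \ref{part3}: $[R_t,M_m]$ has kernel $\mathcal K_t(x-y)\bigl(m(t,y,u,v)-m(t,x,u,v)\bigr)$ with $\mathcal K_t$ the convolution kernel of $R_t$, for which $\int|\mathcal K_t(z)||z|\,\dd z\lesssim t$; and $m$ is smooth in $x$ with $\|\nabla_x m(t,\cdot,u,v)\|_{L^\infty}$ bounded uniformly in $t,u,v$ (differentiate under the integral defining $m$, whose $(\xi,\eta)$-support is a fixed compact set). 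Hence the kernel is $\lesssim|x-y||\mathcal K_t(x-y)|$, whose $x$- and $y$-integrals are $\lesssim t$, so $\|W^{u,v}_{3,t}\|_{L^2\to L^2}\lesssim t$ by Schur's test; and $\varepsilon=1$ works.

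The main obstacle is the quadratic estimate \eqref{rq}. Everything there hinges on extracting the gain $\min(1,t|\xi|)$ from the commutator amplitude $c^v_t$ --- it quantifies how far $\phase_1$ is from a ``shear'' $x\cdot\xi+\phi(\xi)$, for which $T^{\phase_1}_\mu$ is a Fourier multiplier and the commutator vanishes --- and then converting that gain into $L^2$-summability through the frequency localisation $|\xi|\lesssim1/t$ and the elementary identity $\int_0^{1/|\xi|}(t|\xi|)^2\,\dd t/t\lesssim1$. Making rigorous the scale preservation and almost-orthogonality of $T^{\phase_1}$ used in the last step (composition of a $\Phi^2$-FIO with dyadic projections) is routine non-stationary phase, but it is the part that most needs care.
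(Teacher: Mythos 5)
Your proposal is correct in outline but takes a genuinely different route from the paper's, and I want to flag one place where you are under-estimating the required work.

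The paper's proof is organised around a symbolic-calculus lemma (Proposition~\ref{left composition with pseudo}): composing a frequency multiplier $R_t$ on the left of an FIO $T^{\phase}_{a_t}$ yields an FIO with the same phase and amplitude $\rho(t\nabla_x\phase)a_t(\xi) + \text{error}$, where the error is $O(t^{\mu})$ in $S^0_{0,0}$ seminorms. Parts~\ref{part1} and~\ref{part2} are then read off by choosing $\rho$ appropriately, and $V^v_t$ is defined with the explicit amplitude \eqref{r}, which the paper then splits as $\sigma_I(x,t\xi)e^{itv\cdot\xi}+\widehat\theta(t\nabla_x\phase_1)\sigma_{I\!\!I}^{tv}(x,\xi)$; Proposition~\ref{prop:1} treats $\sigma_I$ (annulus support, reduces to the usual vertical square function), and Proposition~\ref{prop:2} treats $\sigma_{I\!\!II}^v$ by almost-orthogonality against $Q_s$ using Corollary~8.6.4 of \cite{G}. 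You instead compute the commutator amplitude $c^v_t$ directly by a change of variables, split it by a frequency cutoff $\eta_0(t\xi)$ rather than by the structure of the amplitude, and run a discrete Littlewood--Paley almost-orthogonality argument for \eqref{rq} using the gain $\min(1,t|\xi|)$ on $\supp\eta_0(t\cdot)$. Both routes rest on the same two ideas (Taylor expansion of $\phase_1(x-t(s-v),\xi)-\phase_1(x,\xi)$ around the shear $-t(s-v)\cdot\nabla_x\phase_1$, and frequency-scale preservation by $T^{\phase_1}$), but yours bypasses the symbolic calculus entirely, which is a genuine simplification in exposition.

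The point you should be more careful about is the bound $\|W^v_{1,t}\|_{L^2\to L^2}\lesssim t\langle v\rangle^K$ (and similarly for $W^u_{2,t}$). On $\supp(1-\eta_0(t\cdot))$, one has $t|\xi|>C_\ast+1$, and a naive non-stationary-phase IBP in $s$ only yields $(t|\xi|)^{-N}\lesssim (C_\ast+1)^{-N}$, which is $O(1)$ but carries no power of $t$; the gain comes only after combining the Taylor expansion (using $\widehat\theta(t\nabla_x\phase_1)=0$ to kill the zeroth-order term) with IBP and then sacrificing powers of $|\xi|\gtrsim 1/t$ to buy powers of $t$, and this requires separate treatment of the regimes $t^2|\xi|\lesssim1$ and $t^2|\xi|\gtrsim1$ (where the crude Taylor bound $|r_t|^K\lesssim(t^2|\xi|)^K$ is useless). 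This is precisely the content of the paper's Proposition~\ref{left composition with pseudo} (the $\textbf{I}_1/\textbf{I}_2$ split and the $R_\alpha^I/R_\alpha^{I\!\!I}$ split), and it is not a one-line remark. Moreover the paper obtains $t^{\mu(\varepsilon)}$ with $\mu(\varepsilon)$ small, not $t^1$; your assertion that $\varepsilon=1$ works looks too optimistic, though this is harmless since the theorem only asks for some $\varepsilon>0$. Similarly, the ``boundedly overlapping Fourier supports'' claim underlying your square-function estimate needs a genuine almost-orthogonality argument (as the paper supplies via Lemma~\ref{lem2} and Corollary~8.6.4 of \cite{G}); it is right in spirit, but the tails coming from the $x$-dependence of the amplitude and the non-exact stationary phase must be summed, and that is the content of a lemma, not a remark. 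With these two steps filled in your argument becomes a valid alternative proof.
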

We will postpone the proof of this theorem until Section \ref{proofoflemma}. Using Theorem \ref{commutator}~\ref{part1}, we can compute
\begin{align*}
&\iint_0^1 |(Q^u_tT^{\phase_2,*}_{\mu}M^{u,v}_t T^{\phase_1}_{\mu}P^v_t)(f)(x)|^2 \frac{\dd t \dd x}{t}
 \lesssim\iint_0^1 |(Q^u_tT^{\phase_2,*}_{\mu}M^{u,v}_t P^v_tT^{\phase_1}_{\mu})(f)(x)|^2 \frac{\dd t \dd x}{t} \\
& \qquad +\iint_0^1 |(Q^u_tT^{\phase_2,*}_{\mu}M^{u,v}_t V^v_t)(f)(x)|^2 \frac{\dd t \dd x}{t} +\iint_0^1 |(Q^u_tT^{\phase_2,*}_{\mu}M^{u,v}_t W^v_{1,t})(f)(x)|^2 \frac{\dd t \dd x}{t} \\
& \lesssim\iint_0^1 |(Q^u_tT^{\phase_2,*}_{\mu}M^{u,v}_t P^v_tT^{\phase_1}_{\mu})(f)(x)|^2 \frac{\dd t \dd x}{t} + \|b\|_{L^\infty(\R^n)}^2\iint_0^1 |V^v_t(f)(x)|^2 \frac{\dd t \dd x}{t} \\
& \qquad + \|b\|_{L^\infty(\R^n)}^2\iint_0^1 |W^v_{1,t}(f)(x)|^2 \frac{\dd t \dd x}{t} \\
& \lesssim\iint_0^1 |(Q^u_tT^{\phase_2,*}_{\mu}M^{u,v}_t P^v_tT^{\phase_1}_{\mu})(f)(x)|^2 \frac{\dd t \dd x}{t} + \|b\|_{L^\infty(\R^n)}^2\|f\|_{L^2(\R^n)}^2.
\end{align*}
Using Theorem \ref{commutator}~\ref{part2} we have
\begin{align*}
&\iint_0^1 |(Q^u_tT^{\phase_2,*}_{\mu}M^{u,v}_t P^v_tT^{\phase_1}_{\mu})(f)(x)|^2 \frac{\dd t \dd x}{t} \\
& \lesssim\iint_0^1 |(U_t^{u,*}R_tM^{u,v}_t P^v_tT^{\phase_1}_{\mu})(f)(x)|^2 \frac{\dd t \dd x}{t} +\iint_0^1 |(W_{2,t}^{u,*}M^{u,v}_t P^v_tT^{\phase_1}_{\mu})(f)(x)|^2 \frac{\dd t \dd x}{t} \\
& \lesssim\iint_0^1 |(R_tM^{u,v}_t P^v_tT^{\phase_1}_{\mu})(f)(x)|^2 \frac{\dd t \dd x}{t} + \sup_{t>0}\int |(M^{u,v}_t P^v_tT^{\phase_1}_{\mu})(f)(x)|^2 \dd x \\
& \lesssim\iint_0^1 |(R_tM^{u,v}_t P^v_tT^{\phase_1}_{\mu})(f)(x)|^2 \frac{\dd t \dd x}{t} + \|b\|_{L^\infty(\R^n)}^2\|f\|_{L^2(\R^n)}.
\end{align*}
Finally, using Theorem \ref{commutator}~\ref{part3} we have
\begin{align*}
&\iint_0^1 |(R_tM^{u,v}_t P^v_tT^{\phase_1}_{\mu})(f)(x)|^2 \frac{\dd t \dd x}{t}=\iint_0^1 |(R_tM_mM_b P^v_tT^{\phase_1}_{\mu})(f)(x)|^2 \frac{\dd t \dd x}{t} \\
& \lesssim\iint_0^1 |(M_mR_tM_b P^v_tT^{\phase_1}_{\mu})(f)(x)|^2 \frac{\dd t \dd x}{t} +\iint_0^1 |(W^{u,v}_{3,t}M_b P^v_tT^{\phase_1}_{\mu})(f)(x)|^2 \frac{\dd t \dd x}{t} \\
& \lesssim\iint_0^1 |(M_mR_tM_b P^v_tT^{\phase_1}_{\mu})(f)(x)|^2 \frac{\dd t \dd x}{t} + \sup_{t>0}\int |(M_b P^v_tT^{\phase_1}_{\mu})(f)(x)|^2 \dd x \\
& \lesssim\iint_0^1 |(R_t M_b P^v_tT^{\phase_1}_{\mu})(f)(x)|^2 \frac{\dd t \dd x}{t} + \|b\|_{L^\infty(\R^n)}^2\|f\|_{L^2(\R^n)}.
\end{align*}
Given that $T^{\phase_1}_{\mu}$ is an $L^2$-bounded operator and independent of $t$, to prove \eqref{aim} we only need to prove
\begin{equation} \label{aim3}
\iint_0^\infty |(R_tM_b P^v_t)(f)(x)|^2 \frac{\dd t \dd x}{t} \lesssim \|b\|_{L^\infty(\R^n)}^2\|f\|_{L^2(\R^n)}^2.
\end{equation}

Now we recall Theorem 1 from \cite{CJ}, which is the quadratic $T(1)$-theorem that will be useful to us.\footnote{This result is implicit in the work of R.~Coifman and Y.~Meyer \cite{CM5}. Now there are also generalisations in the form of local $T(b)$-theorems, see, for example, S.~Hofmann \cite{Ho}.} It concerns a family of operators $\{T_t\}_{t>0}$ which are defined as integration against a kernel:
\begin{equation} \label{oper}
T_t(f)(x) = \int K_t(x,y)f(y) \dd y.
\end{equation}
Estimates of significance for the kernel $K_t$ are
\begin{align}
|K_t(x,y)| & \leq \frac{C}{t^n} \frac{1}{\left(1 + |x-y|/t\right)^{n+1}} \quad \text{and} \label{ker1} \\
|K_t(x,y+h) - K_t(x,y)| & \leq \frac{C}{t^n} \frac{|h/t|}{\left(1 + |x-y|/t\right)^{n+1}} \label{ker2} 
\end{align}
for $|h| \leq t/2$.
\begin{thm} \label{tbglobal}
Suppose the kernel of $T_t$ given by \eqref{oper} satisfies estimates \eqref{ker1} and \eqref{ker2} for some constant $C < \infty$ together with the Carleson measure condition
\begin{equation} \label{carleson}
\sup_Q \frac{1}{|Q|} \int_0^{\ell(Q)}\!\! \int_Q |T_t(1)(x)|^2 \frac{\dd t \dd x}{t} \leq C.
\end{equation}
Then the quadratic estimate
\begin{equation*} 
\iint_0^\infty |T_t(f)(x)|^2 \frac{\dd t \dd x}{t} \lesssim \|f\|_{L^2(\R^n)}^2.
\end{equation*}
holds.
\end{thm}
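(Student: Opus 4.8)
The plan is to split $T_t$ into a \emph{paraproduct} part, controlled directly by the Carleson condition \eqref{carleson}, and a cancellative remainder, handled by almost-orthogonality. Fix a radial $\phi\in\mathcal{C}^\infty_c(\R^n)$ with $\int\phi=1$ and set $P_tf=\phi_t*f$ where $\phi_t(x)=t^{-n}\phi(x/t)$, so that $P_t1=1$. The size estimate \eqref{ker1} gives $|T_t1(x)|=|\int K_t(x,y)\,\dd y|\lesssim1$, uniformly in $t$ and $x$. Write $T_t=R_t+(T_t1)P_t$ with $R_t:=T_t-(T_t1)P_t$, where $(T_t1)$ denotes multiplication by the function $T_t1$. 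Since
\[
\iint_0^\infty|(T_t1)(x)\,(P_tf)(x)|^2\,\frac{\dd t\,\dd x}{t}=\iint_0^\infty|P_tf(x)|^2\,\dd\mu(x,t),\qquad \dd\mu:=|T_t1|^2\,\frac{\dd t\,\dd x}{t},
\]
and $\mu$ is a Carleson measure by \eqref{carleson}, Carleson's embedding theorem together with the pointwise bound $\sup_{|x-y|<t}|P_tf(y)|\lesssim \mathcal{M}f(x)$ (for the Hardy--Littlewood maximal operator $\mathcal{M}$) and the $L^2$-boundedness of $\mathcal{M}$ yields $\iint_0^\infty|(T_t1)(x)(P_tf)(x)|^2\,\frac{\dd t\,\dd x}{t}\lesssim\|f\|_{L^2(\R^n)}^2$.

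It therefore suffices to prove the quadratic estimate for $R_t$. Its kernel $K_t^R(x,y)=K_t(x,y)-(T_t1)(x)\phi_t(x-y)$ still satisfies \eqref{ker1} and \eqref{ker2} (with a larger constant), because $\phi$ is a smooth, compactly supported bump and $\|T_t1\|_{L^\infty(\R^n)}\lesssim1$; moreover $R_t1=T_t1-(T_t1)(P_t1)=0$. Let $Q_sg=\psi_s*g$, where $\psi$ is the even, band-limited Littlewood--Paley function fixed earlier, so that $\int\psi=0$ and $\int_0^\infty Q_s^2\,\frac{\dd s}{s}=I$ on $L^2(\R^n)$. The crucial point is the almost-orthogonality estimate
\[
\|R_tQ_s\|_{L^2(\R^n)\to L^2(\R^n)}\lesssim(\min(s/t,\,t/s))^{\delta}\qquad\text{for some }\delta\in(0,1).
\]
When $s\leq t$, one uses $\int\psi_s=0$ to write the kernel of $R_tQ_s$ as $\int[K_t^R(x,y)-K_t^R(x,z)]\psi_s(y-z)\,\dd y$, bounds the bracket via \eqref{ker2} in the region $|y-z|\leq t/2$ (and trivially elsewhere, using the rapid decay of $\psi$), and gains a factor $s/t$; Schur's test in each variable then gives $\|R_tQ_s\|\lesssim s/t$. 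When $s\geq t$, one instead uses $R_t1=0$, i.e.\ $\int K_t^R(x,y)\,\dd y=0$, to write the kernel as $\int K_t^R(x,y)[\psi_s(y-w)-\psi_s(x-w)]\,\dd y$; the mean value theorem gives $\int|\psi_s(y-w)-\psi_s(x-w)|\,\dd w\lesssim\min(1,|x-y|/s)$, and since \eqref{ker1} makes this integrable against $K_t^R(x,\cdot)$ with total mass $\lesssim(t/s)\log(2+s/t)$, Schur's test gives $\|R_tQ_s\|\lesssim(t/s)\log(2+s/t)$. Both cases are subsumed in the displayed estimate for any $\delta<1$.

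Granting this, one inserts the Calder\'on reproducing formula $R_tf=\int_0^\infty R_tQ_s(Q_sf)\,\frac{\dd s}{s}$ and applies the Cauchy--Schwarz inequality in $s$, together with $\int_0^\infty(\min(s/t,t/s))^{\delta}\,\frac{\dd s}{s}\lesssim1$ uniformly in $t$, to obtain $\|R_tf\|_{L^2(\R^n)}^2\lesssim\int_0^\infty(\min(s/t,t/s))^{\delta}\|Q_sf\|_{L^2(\R^n)}^2\,\frac{\dd s}{s}$. Integrating in $\dd t/t$, using Fubini and $\int_0^\infty(\min(s/t,t/s))^{\delta}\,\frac{\dd t}{t}\lesssim1$, and finally Plancherel gives
\[
\iint_0^\infty|R_tf(x)|^2\,\frac{\dd t\,\dd x}{t}\lesssim\int_0^\infty\|Q_sf\|_{L^2(\R^n)}^2\,\frac{\dd s}{s}=\|f\|_{L^2(\R^n)}^2.
\]
Combining the two pieces completes the proof.

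The main obstacle is the almost-orthogonality estimate for $R_tQ_s$: one must handle the regimes $s\leq t$ and $s\geq t$ separately, playing the cancellation $\int\psi_s=0$ against the kernel regularity \eqref{ker2} in the first, and the cancellation $R_t1=0$ against the smoothness of the Littlewood--Paley family in the second, and in each case reducing the operator-norm bound to a Schur test by means of the size estimate \eqref{ker1}. The remaining steps --- verifying that $K_t^R$ inherits \eqref{ker1}--\eqref{ker2} and that $R_t1=0$, and the Carleson embedding for the paraproduct part --- are routine.
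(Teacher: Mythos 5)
The paper does not prove Theorem~\ref{tbglobal}: it simply quotes it as Theorem~1 of Christ--Journ\'e \cite{CJ}, with a footnote observing that the result is already implicit in Coifman--Meyer \cite{CM5}. Your proof is therefore new material relative to the paper, and it is correct. The decomposition you use, $T_t=R_t+(T_t1)P_t$ with $R_t1=0$, is exactly the classical Coifman--Meyer paraproduct reduction that the paper's footnote alludes to: the Carleson hypothesis \eqref{carleson} takes care of the paraproduct piece via Carleson embedding (with $\sup_{|x-y|<t}|P_tf(y)|\lesssim\mathcal{M}f(x)$ and the $L^2$-boundedness of $\mathcal{M}$), while the remainder $R_t$ inherits \eqref{ker1}--\eqref{ker2} because $\|T_t1\|_{L^\infty}\lesssim 1$ and $\phi_t$ is a bump, and satisfies $R_t1=0$. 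Your almost-orthogonality bound $\|R_tQ_s\|\lesssim\min(s/t,t/s)^\delta$ is handled correctly in the two regimes: for $s\leq t$ the cancellation of $\psi_s$ plays against \eqref{ker2} (with the tail $|y-z|>t/2$ controlled by the rapid decay of $\psi$ and the size estimate \eqref{ker1}); for $s\geq t$ the cancellation $R_t1=0$ plays against the Lipschitz regularity of $\psi_s$, and the resulting $(t/s)\log(2+s/t)$ is indeed $\lesssim(t/s)^\delta$ for any $\delta<1$. Inserting the Calder\'on reproducing formula $\int_0^\infty Q_s^2\,\frac{\dd s}{s}=I$ (valid here since $\psi$ is real, even, and $L^2$-normalised so that $Q_s^*=Q_s$) and running Cauchy--Schwarz and Fubini as you do closes the argument. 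Christ--Journ\'e's own proof in \cite{CJ} is organised somewhat differently --- their Theorem~1 is set up to propagate through multilinear polynomial-growth estimates --- but for the single-parameter quadratic estimate needed here your more elementary route is the one most readers would reconstruct, and it buys a self-contained argument where the paper relies entirely on citation.
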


Now, we have that
\[
|x|^N |\partial^\alpha\theta(x+v)| \lesssim |x + v|^N |\partial^\alpha\theta(x+v)| + |v|^N |\partial^\alpha\theta(x+v)|
\lesssim 1 + |v|^N
\]
for any $N \geq 0$ and multi-index $\alpha$, since $\theta$ is a Schwartz function. Therefore
\[
|\partial^\alpha\theta^v_t(x)| \lesssim \frac{\langle v\rangle^N}{t^{n+|\alpha|}\langle x/t\rangle^N}
\]
and so $\theta^v_t(x-y)$ satisfies \eqref{ker1} with a constant $C$ that only depends polynomially on $v$. By the mean-value theorem, $\theta^v_t(x-y)$ also satisfies \eqref{ker2}. It is even easier to see that the kernel of $R_t$, which we can write in the form $m_t(x-y) := \rho(t\cdot)\inverse{\,}(x-y)$, also satisfies \eqref{ker1}. From these estimates it follows that the kernel of $(R_tM_b P^v_t)$
\begin{equation} \label{kerdef}
K_t(x,y) = \int m_t(x-z)b(z)\theta^v_t(z-y)\dd z
\end{equation}
also satisfies estimates \eqref{ker1} and \eqref{ker2}. This may be seen by splitting the integral in \eqref{kerdef} into integration over two half-planes $H_x$ and $H_y$ containing $x$ and $y$, respectively, with common boundary being the hyperplane perpendicular to the line segment passing though $x$ and $y$ and containing the mid-point $(x+y)/2$. Clearly then $|y-z| \geq |x-y|/2$ when $z \in H_x$ and $|x-z| \geq |x-y|/2$ when $z \in H_y$ so we may conclude
\begin{align*}
& \left|\int m_t(x-z)b(z)\theta^v_t(z-y)\dd z\right| \\
& = \left|\int_{H_x} m_t(x-z)b(z)\theta^v_t(z-y)\dd z + \int_{H_y} m_t(x-z)b(z)\theta^v_t(z-y)\dd z\right| \\
& \lesssim \int_{H_x} \langle(x-z)/t\rangle^{-(n+1)} \langle(z-y)/t\rangle^{-(n+1)} t^{-2n} \dd z \\
& \qquad + \int_{H_y} \langle(x-z)/t\rangle^{-(n+1)} \langle(z-y)/t\rangle^{-(n+1)} t^{-2n} \dd z \\
& \lesssim  \langle(x-y)/t\rangle^{-(n+1)} \int_{H_x} \langle(x-z)/t\rangle^{-(n+1)} t^{-n} \dd z \\
& \qquad + \langle(x-y)/t\rangle^{-(n+1)} \int_{H_y} \langle(z-y)/t\rangle^{-(n+1)} t^{-2n} \dd z \\
& \lesssim  t^{-n}\langle(x-y)/t\rangle^{-(n+1)}.
\end{align*}
This proves \eqref{ker1} and \eqref{ker2} follows similarly (see \cite{GT2} and also page A-36 in \cite{G}).

The measure
\[
|(R_tM_b P^v_t)(1)(x)|^2 \frac{\dd t \dd x}{t} = |R_t(b)(x)|^2 \frac{\dd t \dd x}{t}
\]
is a Carleson measure, since $b \in L^\infty(\R^n) \subset \text{BMO}(\R^n)$ and $m_t$ has mean-value zero (see Theorem 3 on page 159 of \cite{S}), whence we have estimate \eqref{carleson}. Therefore, we may apply Theorem \ref{tbglobal} and obtain \eqref{aim3} as required.

\section{Proof of Theorem \ref{commutator}} \label{proofoflemma}

Both $Q^u_t$ and $P^v_t$ are convolutions with smooth functions and hence multipliers. To deal with both such operators together we define
\[
R_t(f) = \int \rho(t\xi)\widehat{f}(\xi)e^{ix\cdot\xi}\ddd\xi,
\]
where we have in mind that $\rho$ will eventually be assumed to have properties similar to those of $\widehat{\psi}$ or $\widehat{\theta}$. To be precise, it is sufficient to assume $(x,\xi) \mapsto \rho(\xi)$ belongs to $S^0_{1,0}(n,1)$. Define
\begin{equation} \label{four}
T_{a_t}^{\phase}(f)(x) = \int a_t(\xi)\widehat{f}(\xi)e^{i\phase(x,\xi)} d\xi,
\end{equation}
for a smooth amplitude $(x,\xi) \mapsto a_t(\xi)$ which belongs to $S^0_{0,0}(n,1)$ uniformly in $t \in (0,1)$ and $\phase \in \Phi^2$ satisfying the strong non-degeneracy condition. Now we can compute
\begin{align*}
(R_{t}T_{a_t}^{\phase})(f)(x) & = \int \rho(t\eta)\left(\int \left( \int a_t(\xi)\widehat{f}(\xi)e^{i\phase(y,\xi)} \ddd\xi\right)e^{-iy\cdot\eta} \dd y \right)e^{ix\cdot\eta} \ddd\eta \\
& = \iiint \rho(t\eta)a_t(\xi) e^{i(x-y)\cdot\eta + i\phase(y,\xi)-i\phase(x,\xi)}\widehat{f}(\xi)e^{i\phase(x,\xi)} \ddd\eta \ddd\xi\dd y \\
& := \int \sigma_{t}(x,\xi)\widehat{f}(\xi)e^{i\phase(x,\xi)}\ddd\xi,
\end{align*}
where
\begin{equation} \label{product}
\sigma_{t}(x,\xi):= a_t(\xi)\iint \rho(t\eta) e^{i(x-y)\cdot\eta + i\phase(y,\xi)-i\phase(x,\xi)} \ddd\eta \dd y.
\end{equation}
Therefore $R_{t}T_{\sigma}^{\phase}$ can be represented as a Fourier integral operator with phase function $\phase$ and amplitude $\sigma_{t}$. Now we would like to understand the behaviour of $\sigma_{t}$. To this end we start with the following proposition.

\begin{prop}\label{left composition with pseudo}
Assume that $\phase \in \mathcal{C}^{\infty}(\R^n \times \R^n)$ is such that
\begin{enumerate}[label={\upshape (\roman*)}]

\item \label{partone}for constants $C_1$ and $C_2$ one has $C_1 |\xi|\leq |\nabla_{x} \phase(x,\xi)| \leq C_2 |\xi|$ for all $x,\xi \in \R^n$, and

\item \label{parttwo}for all $|\alpha|,|\beta|\geq 1$ one has $|\partial^{\alpha}_{x} \phase(x,\xi)|\leq C_{\alpha}\langle \xi \rangle$ and $|\partial^{\alpha}_{\xi}\partial^{\beta}_{x} \phase(x,\xi)|\leq C_{\alpha,\beta},$ for all $x,\xi\in \R^n$.

\end{enumerate}
Then, for each $\varepsilon \in (\frac{1}{2},1)$, there exist $M=M(\varepsilon)$ and $\mu=\mu(\varepsilon)$, both greater than zero, such that \eqref{product} can be written as
\begin{equation}  \label{main left composition estim}
\sigma_{t}(x,\xi) = \rho(t\nabla_{x}\phase(x,\xi))a_t(\xi) + \sum_{0<|\alpha| < M}\frac{t^{|\alpha|}}{\alpha!}\, \sigma_{\alpha}(t,x,\xi)+t^{\mu} r(t,x,\xi),
\end{equation}
for $t\in [0,1]$, where for all multi-indices $\beta$ and $\gamma$ one has
\begin{align*}
|\partial^{\gamma}_{\xi} \partial^{\beta}_{x}\sigma_{\alpha}(t,x,\xi)| & \leq C_{\alpha, \beta, \gamma}\,t^{-\varepsilon|\alpha|} \quad \text{and} \\
|\partial^{\alpha}_{\xi} \partial^{\beta}_{x} r(t,x,\xi)| & \leq C_{\alpha, \beta}.
\end{align*}
\end{prop}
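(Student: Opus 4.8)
The plan is to analyze the oscillatory integral in \eqref{product} by a stationary phase / non-stationary phase argument in the variables $(y,\eta)$, treating $(x,\xi)$ as parameters and exploiting the homogeneity-free hypotheses \ref{partone}--\ref{parttwo} on $\phase$. The key observation is that after the substitution $\eta \mapsto \eta/t$ (absorbing the factor $t$), the phase of the inner double integral becomes
\[
\Psi(y,\eta) = (x-y)\cdot\tfrac{\eta}{t} + \phase(y,\xi) - \phase(x,\xi),
\]
and its $\eta$-gradient vanishes precisely when $y = x$, while its $y$-gradient at $y=x$ equals $-\eta/t + \nabla_x\phase(x,\xi)$, which vanishes when $\eta = t\nabla_x\phase(x,\xi)$. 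Thus the (unique) critical point of the combined $(y,\eta)$-phase sits at $y=x$, $\eta = t\nabla_x\phase(x,\xi)$, and the leading term of the asymptotic expansion is $\rho(t\nabla_x\phase(x,\xi))a_t(\xi)$, as claimed. The role of hypothesis \ref{partone} is exactly to guarantee that this critical value $t\nabla_x\phase(x,\xi)$ stays in a region where $\rho$ (supported in an annulus, or at least in $S^0_{1,0}$) and its derivatives are under control, and that the Hessian in the $y$-variable is nondegenerate with controlled size; \ref{parttwo} provides the symbol-type bounds on the remainder terms.

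First I would regularize: insert a cutoff to justify the manipulations and Fubini, then change variables to pull out the $t$-scaling. Next, I would Taylor-expand $\phase(y,\xi) - \phase(x,\xi)$ about $y=x$, writing
\[
\phase(y,\xi) - \phase(x,\xi) = (y-x)\cdot\nabla_x\phase(x,\xi) + \sum_{2\le|\alpha|<M}\frac{(y-x)^\alpha}{\alpha!}\partial_x^\alpha\phase(x,\xi) + (\text{Taylor remainder of order }M),
\]
and combine the linear term with $(x-y)\cdot\eta$ so that the $y$-integral, after the substitution $\eta \mapsto \eta + t\nabla_x\phase(x,\xi)$, becomes (up to the factor $\rho(t\nabla_x\phase(x,\xi) + t\eta)$ and the prefactor $a_t(\xi)$) an integral of $e^{iy\cdot\eta}$ against the exponential of the higher-order Taylor terms. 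Expanding that exponential in a finite Taylor series in $t$ — each higher-order monomial in $(y-x)$ carries an explicit power $t^{|\alpha|}$ after the $\eta$-rescaling — and repeatedly integrating by parts in $\eta$ (so that the polynomial factors $(y-x)^\alpha$ become $\eta$-derivatives landing on $\rho$) produces exactly the sum $\sum_{0<|\alpha|<M} t^{|\alpha|}\alpha!^{-1}\sigma_\alpha(t,x,\xi)$. The $\eta$-derivatives of $\rho(t\,\cdot)$ scaled appropriately contribute the factor $t^{-\varepsilon|\alpha|}$ claimed in the bound for $\sigma_\alpha$; here one uses that $\rho\in S^0_{1,0}(n,1)$ so each $\xi$-derivative of $\rho$ is harmless, and one balances the two competing powers of $t$ to fix the relation between $M$, $\mu$, and $\varepsilon\in(\tfrac12,1)$. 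The symbol estimates $|\partial_\xi^\gamma\partial_x^\beta\sigma_\alpha|\le C_{\alpha,\beta,\gamma}t^{-\varepsilon|\alpha|}$ then follow by differentiating under the integral sign and using \ref{parttwo} together with the fact that derivatives falling on $\nabla_x\phase(x,\xi)$ produce factors of size $\langle\xi\rangle$ which are compensated by the compact $\xi$-support coming ultimately from $a_t$ (recall in the application $a_t(\xi) = \mu(\xi)\widehat\psi$ or similar, hence $|\xi|\sim 1$ on the support — although for the proposition as stated one keeps $\langle\xi\rangle$-powers and notes they are absorbed).

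For the remainder $t^\mu r(t,x,\xi)$: after the finite Taylor expansion to order $M$, the error term involves the order-$M$ Taylor remainder of $\phase$ in $y$ integrated against a rapidly decaying (in $\eta$, after enough integrations by parts) kernel. Non-stationary phase / repeated integration by parts in $y$ away from $y=x$ gives arbitrary decay in $|y-x|/t$, localizing to $|y-x|\lesssim t$, on which the order-$M$ remainder is $O(t^M)$ in $y$ but then $O(t^{M(1-\varepsilon)})$ after the $\eta$-rescaling and integration by parts; choosing $M$ large relative to $\varepsilon$ and to the number of derivatives one wants to control yields $\mu>0$ and the uniform bound $|\partial_\xi^\alpha\partial_x^\beta r|\le C_{\alpha,\beta}$. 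The main obstacle I anticipate is bookkeeping the two opposing effects of the $t$-rescaling of $\eta$: rescaling gains a factor $t^n$ from the measure and $t^{|\alpha|}$ from each Taylor monomial, but loses $t^{-|\alpha|}$ (or $t^{-\varepsilon|\alpha|}$ with the $S^0_{1,0}$ regularity) from differentiating $\rho(t\,\cdot)$; keeping these balanced uniformly in $(x,\xi)$ — and making sure the constants depend only on finitely many of the $C_\alpha, C_{\alpha,\beta}$ from the hypotheses and not on $t$ — is the delicate part, and is precisely why one needs $\varepsilon>\tfrac12$ rather than merely $\varepsilon<1$. A secondary technical point is justifying all the Fubini and integration-by-parts steps for the a priori only conditionally convergent oscillatory integrals, which is handled in the standard way by inserting Schwartz cutoffs and passing to the limit at the end.
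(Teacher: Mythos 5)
Your overall stationary-phase picture is correct --- the leading term $\rho(t\nabla_x\phase(x,\xi))a_t(\xi)$ does come from the critical point at $y=x$, $\eta = \nabla_x\phase(x,\xi)$ --- and the two approaches (Taylor-expanding the phase $\phase(y,\xi)$ in $y$ as you propose, versus Taylor-expanding the multiplier $\rho$ as the paper does) should produce the same asymptotic algebra. But the proposal stops short of the one estimate that the proposition actually turns on, and in one place it gives a reason that is wrong.

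The paper changes variables $\eta = \nabla_x\phase(x,\xi) + \zeta$ (with no $t$-rescaling) so that the exponent becomes $(x-y)\cdot\zeta + \Phi(x,y,\xi)$ with $\Phi(x,y,\xi) = \phase(y,\xi)-\phase(x,\xi)+(x-y)\cdot\nabla_x\phase(x,\xi)$, and Taylor-expands $\rho(t\nabla_x\phase(x,\xi)+t\zeta)$ to order $M$. The $\zeta^\alpha$ factors then collapse, after the Fourier inversion in $\zeta$, to $\partial_y^\alpha[\chi(x-y)e^{i\Phi}]|_{y=x}$, and the proposition rests on the Fa\`a di Bruno observation (estimate \eqref{half}) that because $\Phi(x,x,\xi)=0$ and $\partial_y\Phi(x,x,\xi)=0$, every surviving factor $\partial_y^{\gamma_j}\Phi$ has $|\gamma_j|\ge 2$, so at most $|\alpha|/2$ factors of $\langle\xi\rangle$ appear, i.e. $\bigl|\partial_y^\alpha e^{i\Phi}|_{y=x}\bigr|\lesssim\langle\xi\rangle^{|\alpha|/2}$. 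This is then traded for a power of $t$ against $\partial^\alpha\rho(t\nabla_x\phase)$ by interpolation: $\langle t\nabla_x\phase\rangle^{-|\alpha|}\langle\xi\rangle^{|\alpha|/2}\lesssim t^{-|\alpha|/2}\langle t\nabla_x\phase\rangle^{-|\alpha|}\langle t\xi\rangle^{|\alpha|/2}\lesssim t^{-|\alpha|/2}$ using hypothesis \ref{partone}. That is the mechanism behind the $t^{-\varepsilon|\alpha|}$ bound for $\varepsilon>1/2$, and it is precisely what your ``balance the two competing powers of $t$'' gestures at without making precise. In your setup --- expanding $e^{i(\text{Taylor terms})}$ --- the same gain should arise because all the Taylor monomials have degree $\ge 2$ in $(y-x)$, so a product of $k$ of them carries total degree $\ge 2k$ but only $k$ factors of the order-$\langle\xi\rangle$ quantity $\partial_x^\alpha\phase$; but you need to say this explicitly, since it is the heart of the proposition.

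Your explanation of how the $\langle\xi\rangle$-powers are controlled is incorrect: you attribute it to the compact $\xi$-support of $a_t$ in the application. The proposition is stated for general $a_t\in S^0_{0,0}$, the paper's proof immediately reduces to $a_t\equiv 1$ (not compactly supported), and the compensation comes from the decay of $\partial^\alpha\rho$ evaluated at $t\nabla_x\phase$ combined with the lower bound $|\nabla_x\phase|\ge C_1|\xi|$, not from the support of $a_t$. Two further technical gaps: you do not introduce the near-diagonal/far-diagonal decomposition $\chi(x-y)$, which is what makes the repeated non-stationary integration by parts rigorous and puts the far-diagonal piece into the error $r(t,x,\xi)$; and you leave the remainder analysis at the level of a sketch, whereas the paper needs a further split of the Taylor remainder $R_\alpha$ into $|\zeta|\lesssim\langle\xi\rangle$ and $|\zeta|\gtrsim\langle\xi\rangle$ pieces, each handled by a different integration by parts, and it is there (in the estimate $|R_\alpha^{I}|\lesssim t^{-\varepsilon|\alpha|}\langle\xi\rangle^{2n+(1/2-\varepsilon)|\alpha|}$) that the restriction $\varepsilon>1/2$ and the choice $|\alpha|>4n/(2\varepsilon-1)$ actually enter.
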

\begin{proof}
First we remark that it is sufficient to prove the proposition in the special case $a_t(\xi) \equiv 1$ in \eqref{product}. The general case may be obtained by multiplying \eqref{main left composition estim} (with $a_t(\xi)$ replaced by the constant $1$) by $a_t(\xi)$ and observing that all the claimed properties hold for the product since $a_t \in S^0_{0,0}(n,1)$.

Let $\chi(x-y)\in \mathcal{C}^{\infty}(\mathbb{R}^n \times \R^n)$ such that $0\leq \chi\leq 1,$ $\chi(x-y)=1$ for $|x-y|<\frac{\varepsilon}{2}$ and $\chi(x-y)=0$ for $|x-y|>\varepsilon$. We now decompose $\sigma_t(x,\xi)$ into two parts $\textbf{I}_1 (t,x,\xi)$ and $\textbf{I}_2 (t,x,\xi)$ where
\begin{equation*}
  \textbf{I}_1 (t,x,\xi)=\iint\rho(t\eta)\, (1-\chi(x-y))\,e^{i(x-y)\cdot\eta+i\phase(y,\xi)-i\phase(x,\xi)}\,d\eta dy,
  \end{equation*}
  and
\begin{equation*}
  \textbf{I}_2 (t,x,\xi)=\iint\rho(t\eta)\,\chi(x-y)\,e^{i(x-y)\cdot\eta+i\phase(y,\xi)-i\phase(x,\xi)}\,d\eta dy.
  \end{equation*}

We begin by analysing $\textbf{I}_1 (t,x,\xi)$. To this end we introduce the differential operators
\[
^{t}L_{\eta} =-i\sum_{j=1}^{n}\frac{x_j -y_j}{|x-y|^2}\partial_{\eta_j} \quad \text{and} \quad
^{t}L_{y} =\frac{1}{|\nabla_{y}\phase(y,\xi)|^2 -i\Delta_{y}\phase(y,\xi)}(1-\Delta_{y}).
\]
Because of the conditions on the phase function one has $|\langle \nabla_{y}\phase(y,\xi) \rangle^2 -i\Delta_{y}\phase(y,\xi)|\geq |\nabla_{y}\phase(y,\xi)|^2\geq C_1 \xxi^2$. Now integration by parts yields
\begin{equation*}
  \textbf{I}_1 (t,x,\xi)=\iint L^{N_{2}}_{y} \{e^{-iy\cdot\eta}L^{N_1}_{\eta}[(1-\chi(x-y))\rho(t\eta)]\}\,e^{ix\cdot\eta+i\phase(y,\xi)-i\phase(x,\xi)}\,\ddd\eta\, dy.
  \end{equation*}
Now since $t\leq 1$, provided $M<N_1$, we have
\begin{align*}
|\partial^{N_1}_{\eta_j}\rho(t\eta)| & \leq C_{N_{1}}\, t^{N_1} \langle t\eta\rangle^{-N_1}= C_{N_{1}}\, t^{N_1} \langle t\eta\rangle^{-M}\langle t\eta\rangle^{-(N_1 -M)} \\
& \leq C_{N_{1}}\, t^{N_{1}} (t^2+|t\eta|^2)^{-M/2} \langle t\eta\rangle^{-(N_{1}-M)}\leq C_{N_1}\, t^{N_{1}-M}\langle \eta\rangle^{-M}.
\end{align*}
Therefore, choosing $N_2 < M < N_1$ large enough we have,
\begin{equation*}
  | \textbf{I}_1 (t,x,\xi)| \leq t^{N_1 -M}\xxi^{-2N_2}\iint_{|x-y|>\varepsilon} \langle \eta\rangle^{2N_2} |x-y|^{-2N_1} \langle\eta\rangle^{-M} \,d\eta\, dy\lesssim  t^{N_1 -M} \xxi^{-2N_2}.
  \end{equation*}
Estimating derivatives of $\textbf{I}_1(t,x,\xi)$ with respect to $x$ and $\xi$ may introduce factors estimated by powers of $\xxi$, $\langle \eta\rangle$ and $|x-y|$, which can all be handled by choosing $N_1$ and $N_2$ appropriately. Therefore for all $N$ and some $\nu>0$
\begin{equation*}
  | \partial^{\alpha}_{\xi} \partial^{\beta}_{x}\textbf{I}_1 (t,x,\xi)| \leq C_{\alpha, \beta}\, t^{\nu}\,\xxi^{-N}
  \end{equation*}
and so $t^{-\nu}\textbf{I}_1 (t,x,\xi)$ forms part of the error term $r(t,x,\xi)$ in \eqref{main left composition estim}.

We now proceed to the analysis of $\textbf{I}_2 (t,x,\xi)$. First we make the change of variables $\eta=\nabla_x\phase(x,\xi)+\zeta$ in the integral defining $\textbf{I}_2 (t,x,\xi)$ and then expand $\rho(t\eta)$ in a Taylor series to obtain
\begin{equation*}\label{Eq:ralpha}
  \begin{aligned} \rho(t\nabla_x\phase(x,\xi)+t\zeta) & = \sum_{0 \leq |\alpha|<M} t^{|\alpha|}\frac{\zeta^\alpha}{\alpha !} (\partial_\xi^\alpha\rho)(t\nabla_x\phase(x,\xi)) + t^{M} \sum_{|\alpha|=M} C_\alpha {\zeta^\alpha} r_\alpha(x,\xi,\zeta), \\ \text{where} \quad r_\alpha(x,\xi,\zeta) & = \int_0^1 (1-s)^{M-1} (\partial_\xi^{\alpha} \rho)(t\nabla_x\phase(x,\xi)+st\zeta) ds.
  \end{aligned} \end{equation*}
If we set
\[
\Phi(x,y,\xi)=\phase(y,\xi)-\phase(x,\xi)+(x-y)\cdot\nabla_x\phase(x,\xi),
\]
we obtain
\[
\textbf{I}_2 (t,x,\xi)= \sum_{|\alpha|<M} \frac{t^{|\alpha|}}{\alpha!}\, \sigma_{\alpha}(t,x,\xi) + t^{M}\, \sum_{|\alpha|=M} C_\alpha\, R_{\alpha}(t,x,\xi),
\]
where
\begin{align*}
\sigma_{\alpha}(t,x,\xi) & = \iint e^{i(x-y)\cdot\zeta+i\Phi(x,y,\xi)} \zeta^{\alpha}\,\chi(x-y)\, (\partial_\xi^\alpha \rho)(t\nabla_x\phase(x,\xi))\, dy \ddd\zeta \\
& = (\partial_\xi^\alpha \rho)(t\nabla_x\phase(x,\xi)) \partial_y^{\alpha}\left.\left[ e^{i\Phi(x,y,\xi)}\chi(x-y) \right]\right|_{y=x}
\end{align*}
and
\[
R_{\alpha}(t,x,\xi) = \iint e^{i(x-y)\cdot\zeta} e^{i\Phi(x,y,\xi)} \zeta^{\alpha} \chi(x-y) \, r_\alpha(t, x,\xi,\zeta) dy \ddd\zeta.
\]

We now claim that
\begin{equation} \label{half}
\left|\left.\partial_y^{\gamma} e^{i\Phi(x,y,\xi)}\right|_{y=x}\right|\lesssim \bra{\xi}^{|\gamma|/2}.
\end{equation}
We first observe that when $\gamma = 0$, \eqref{half} is obvious. To obtain \eqref{half} for $\gamma \neq 0$ we recall Faa-Di Bruno's formula:
\[
\partial_y^{\gamma} e^{i\Phi(x,y,\xi)}=\sum_{\gamma_1 + \cdots+ \gamma_k =\gamma} C_\gamma (\partial^{\gamma_{1}}_{y}\Phi(x,y,\xi))\cdots (\partial^{\gamma_{k}}_{y}\Phi(x,y,\xi))\,e^{i\Phi(x,y,\xi)}.
\]
where the sum ranges of $\gamma_j$ such that $|\gamma_{j}|\geq 1$ for $j=1,2,\dots, k$ and $\gamma_1 + \cdots+ \gamma_k =\gamma$ for some $k \in \N$. Since $\Phi(x,x,\xi)=0$ and $\left.\partial_y\Phi(x,y,\xi)\right|_{y=x}=0$, setting $y=x$ in the expansion above leaves only terms in which $|\gamma_j|\geq 2$ for all $j = 1,2,\dots,k$. But $\sum_{j=1}^{k} |\gamma_j |\leq |\gamma|,$ so we actually have $2k\leq |\gamma|$, that is $k\leq \frac{|\gamma|}{2}$. Assumption \ref{parttwo} on the phase tells us that $|\partial^{\gamma_{j}}_{y}\Phi(x,y,\xi)| \lesssim \xxi$, so
\[
\left|\left.\partial_y^{\gamma} e^{i\Phi(x,y,\xi)}\right|_{y=x}\right| \lesssim \xxi \cdots \xxi \lesssim \xxi ^{k} \lesssim \bra{\xi}^{|\gamma|/2},
\]
which is \eqref{half}.

If we use the fact that $t\leq 1$ and assumption \ref{partone} on the phase function $\phase$, we obtain
\[
|\sigma_{\alpha}(t,x,\xi)| \lesssim \bra{t\nabla_x\phase(x,\xi)}^{-|\alpha|} \bra{\xi}^{\frac{|\alpha|}{2}}
\lesssim t^{-\frac{|\alpha|}{2}}\bra{t\nabla_x\phase(x,\xi)}^{-|\alpha|} \bra{t\xi}^{\frac{|\alpha|}{2}}
\lesssim t^{-\frac{|\alpha|}{2}},
\]
when $|\alpha| > 0$ and, clearly, $\sigma_{\alpha}(t,x,\xi) = \rho(t\nabla_x\phase(x,\xi))$ when $\alpha = 0$.

The derivatives of $\sigma_{\alpha}$ with respect to $x$ or $\xi$ do not change estimates when applied to $\rho$ by the assumptions of the lemma. When applied to $\partial_y^{\alpha} e^{i\Phi(x,y,\xi)}|_{y=x}$ they do not change estimates since $|\partial_\xi^\beta\partial_x^\alpha\phase(x,\xi)|\leq C_{\alpha\beta}.$ Therefore for all multi-indices $\beta$, $\gamma\in \mathbb{Z}_{+}$,
\[
|\partial^{\beta}_{\xi} \partial^{\gamma}_{x} \sigma_{\alpha}(t,x,\xi)| \leq C_{\beta, \gamma}  t^{-\frac{|\alpha|}{2}}.
\]
as required.

To estimate the remainder $R_{\alpha}$, we take $g\in \mathcal{C}_0^\infty(\R^n)$ such that $g(x)=1$ for $|x|<r/2$ and $g(x)=0$ for $|x|>r$, for some small $r>0$ to be chosen later. We then decompose
\begin{equation*}\label{EQ:Ralphas}
 \begin{aligned} R_{\alpha}(t,x,\xi) &= R_\alpha^I(t,x,\xi)+ R_\alpha^{I\!\!I}(t,x,\xi) \\ & = \iint e^{i(x-y)\cdot\zeta} g\left(\frac{\zeta}{\bra{\xi}}\right) D_y^{\alpha} \left[ e^{i\Phi(x,y,\xi)}\, \chi(x-y)\,r_\alpha(t,x,\xi,\zeta) \right] \dd y \ddd\zeta \\ & \qquad + \iint e^{i(x-y)\cdot\zeta} \left(1-g\left( \frac{\zeta}{\bra{\xi}}\right)\right) D_y^{\alpha}\left[  e^{i\Phi(x,y,\xi)}\,\chi(x-y)\, r_\alpha(t,x,\xi,\zeta)\right] \dd y \ddd\zeta.
   \end{aligned}
   \end{equation*}

As a preamble to estimating $R_\alpha^I(t,x,\xi)$, we note that the inequality
\[
\bra{\xi}\leq 1+|\xi|\leq \sqrt{2}\bra{\xi},
\]
and assumption \ref{partone} on the phase function $\phase$ yield
\[
\begin{aligned}
\bra{\nabla_x\phase(x,\xi)+s\zeta}\leq & (C_2\sqrt{2}+r)\bra{\xi} \\ \sqrt{2}\bra{\nabla_x\phase(x,\xi)+s\zeta}\geq & 1+|\nabla_x\phase|-|\zeta| \geq \bra{\nabla_x\phase}-|\zeta|\geq (C_1-r)\bra{\xi}.
\end{aligned}
\]
Therefore, if we choose $r<C_1,$ then for any $s\in (0,1)$, $\bra{\nabla_x\phase(x,\xi)+s\zeta}$ and $\bra{\xi}$ are equivalent.

This yields that for $|\zeta|\leq r\bra{\xi}$, $r_\alpha(t, x,\xi,\zeta)$ and all of its derivatives are dominated by $ \bra{t\xi}^{-|\alpha|}.$ Furthermore, for $t\leq 1$, it follows from the properties of $r_\alpha$ that
\begin{equation}\label{eq:estr}
\begin{aligned}
\left|\partial_\zeta^{\beta}\left( g\left( \frac{\zeta}{\bra{\xi}}\right) r_\alpha(t,x,\xi,\zeta)\right)\right| & \leq C_{\alpha,\beta}\sum_{\gamma\leq\beta} \left|\partial_\zeta^\gamma g\left(\frac{\zeta}{\bra{\xi}}\right) \partial_\zeta^{\beta-\gamma} r_\alpha(t,x,\xi,\zeta)\right| \\
& \leq C_{\alpha,\beta} \sum_{\gamma\leq\beta}t^{|\beta|-|\gamma|}\bra{\xi}^{-|\gamma|} \bra{t\xi}^{-|\alpha|-|\beta|+|\gamma|} \\ & \leq C_{\alpha,\beta} t^{-\varepsilon|\alpha|} \,\bra{\xi}^{-|\beta|-\varepsilon|\alpha|},
\end{aligned}
\end{equation}
for all $\varepsilon\in (\frac{1}{2}, 1)$.

At this point we also need estimates for $\partial_y^\alpha e^{i\Phi(x,y,\xi)}$ off the diagonal, that is, when $x\neq y.$ This derivative has at most $|\alpha|$ powers of terms $\nabla_y\phase(y,\xi)-\nabla_x\phase(x,\xi)$, possibly also multiplied by at most $|\alpha|$ higher order derivatives $\partial_y^\beta\phase(y,\xi)$, which can be estimated by $(|y-x|\bra{\xi})^{|\alpha|}$ by property \ref{parttwo} of the phase function. The term containing no difference $\nabla_y\phase(y,\xi)-\nabla_x\phase(x,\xi)$ is the product of at most $|\alpha|/2$ terms of the type $\partial_y^\beta\phase(y,\xi)$, which can be estimated by $\bra{\xi}^{|\alpha|/2}$ in view of property \ref{parttwo}. These observations yield
\[
|\partial_y^\alpha e^{i\Phi(x,y,\xi)}|\leq C_\alpha (1+|x-y|\bra{\xi})^{|\alpha|} \bra{\xi}^{|\alpha|/2},
\]
and therefore we also have
\begin{equation}\label{eq:ests}
\left|\partial_y^{\alpha}\left[ e^{i\Phi(x,y,\xi)} \chi(x-y) \right] \right|\lesssim (1+|x-y|\bra{\xi})^{|\alpha|}\bra{\xi}^{\frac{|\alpha|}{2}}.
\end{equation}

Now to estimate $R^I_{\alpha}(t,x,\xi),$ let
\[
L_\zeta=\frac{(1-\bra{\xi}^2\Delta_\zeta)}{1+\bra{\xi}^2|x-y|^2}, \quad \text{so} \quad L_\zeta^N e^{i(x-y)\cdot\zeta}=e^{i(x-y)\cdot\zeta}.
\]
Integration by parts with $L_\zeta$ yield
\[
\begin{aligned}
& R^I_{\alpha}(t,x,\xi) \\
& = \iint \frac{e^{i(x-y)\cdot\zeta}\,\partial_y^{\alpha}\left[ \chi(x-y)\, e^{i\Phi(x,y,\xi)}\right]}{(1+\bra{\xi}^2 |x-y|^2)^N} (1-\bra{\xi}^{2}\Delta_\zeta)^N \left\{ g\left(\frac{\zeta}{\bra{\xi}}\right) \, r_\alpha(t,x,\xi,\zeta) \right\} \dd y \ddd\zeta \\
& = \iint \frac{e^{i(x-y)\cdot\zeta}\,\partial_y^{\alpha}\left[\chi(x-y)\,e^{i\Phi(x,y,\xi)}\right]}{(1+\bra{\xi}^2 |x-y|^2)^N} \sum_{|\beta|\leq 2N} c_{\beta}\bra{\xi}^{|\beta|} \left\{ \partial_\zeta^{\beta}\left( g\left(\frac{\zeta}{\bra{\xi}}\right) r_\alpha(t,x,\xi,\zeta)\right) \right\} \dd y \ddd\zeta.
\end{aligned}
\]
Using estimates (\ref{eq:estr}), (\ref{eq:ests}) and that the size of the support of $g(\zeta/\bra{\xi})$ in $\zeta$ is bounded by $(r\bra{\xi})^n$, yield
\[
\begin{aligned}
|R^I_{\alpha}(t,x,\xi)| & \leq C\,t^{-\varepsilon|\alpha|}\,\sum_{|\beta|\leq 2N} \bra{\xi}^{n+|\beta|} \bra{\xi}^{-\varepsilon|\alpha|-|\beta|} \bra{\xi}^{\frac{|\alpha|}{2}} \int_{|x-y|<\varepsilon}\frac{(1+|x-y|\bra{\xi})^{|\alpha|}} {(1+\bra{\xi}^2 |x-y|^2)^N} \dd y \\ & \leq C\, t^{-\varepsilon|\alpha|} \bra{\xi}^{2n+(\frac{1}{2}-\varepsilon)|\alpha|},
\end{aligned}
\]
if we choose $2N>n$, and the constant $C$ is independent of $t$ (because of (\ref{eq:estr})). The derivatives of $R_{\alpha}^I(t,x,\xi)$ with respect to $x$ and $\xi$ give an extra power of $\zeta$ under the integral. This amounts to taking more $y$-derivatives, yielding a higher power of $\bra{\xi}.$ However, for a given number of derivatives of the remainder $R_{\alpha}^I(t,x,\xi)$, we are free to choose $M=|\alpha|$ as large as we like and therefore the higher power of $\bra{\xi}$ will not cause a problem. Thus for all multi-indices $\beta$, $\gamma\in \mathbb{Z}_{+},$ all $\varepsilon\in (\frac{1}{2},1)$ and all $|\alpha|>\frac{4n}{2\varepsilon-1},$ we have
\begin{equation*}
|\partial^{\beta}_{\xi} \partial^{\gamma}_{x} R_{\alpha}^{I}(t,x,\xi)| \leq C_{\beta, \gamma} t^{-\varepsilon|\alpha|},
\end{equation*}
where the constant $C_{\beta, \gamma}$ does not depend on $t$.

Finally to estimate $R_{\alpha}^{I\!\!I}(t,x,\xi)$ one defines
\[
\Psi(x,y,\xi,\zeta)=(x-y)\cdot\zeta+\Phi(x,y,\xi)= (x-y)\cdot(\nabla_x\phase(x,\xi)+\zeta)+\phase(y,\xi)-\phase(x,\xi).
\]
It follows from assumptions \ref{partone} and \ref{parttwo} on the phase function $\phase$ that if we choose $\varepsilon <r/2C_0,$ then since $|x-y|<\varepsilon$ in the support of $\chi$, one has (using that we are in the region $|\zeta|\geq r\bra{\xi}$)
\begin{equation*}\label{eq:rho}
\begin{aligned}
|\nabla_y\Psi| & =|-\zeta+\nabla_y\phase-\nabla_x\phase|\leq 2C_2(|\zeta|+\bra{\xi}), \quad \text{and} \\
|\nabla_y\Psi| & \geq |\zeta|-|\nabla_y\phase-\nabla_x\phase| \geq \frac{1}{2}|\zeta|+\left(\frac{r}{2}-C_0|x-y| \right)\bra{\xi}\geq C(|\zeta|+\bra{\xi}).
\end{aligned}
\end{equation*}
Now, since $|\partial_y^\beta\phase(y,\xi)|\leq C_\beta\bra{\xi}$, for any $\beta$ we have the estimate
\begin{equation}\label{no idea what to call 1}
|\partial_y^\beta(e^{-i\Phi(x,y,\xi)}\d_y^{\gamma} e^{i\Phi(x,y,\xi)})(x,y,\xi)|\lesssim\bra{\xi}^{|\gamma|}.
\end{equation}
For $M=|\alpha|>0$ we also observe that
\begin{equation} \label{eq:rs}
|r_\alpha(t, x,\xi,\zeta)|\leq C_\alpha.
\end{equation}
For the differential operator defined to be $^tL_y=i|\nabla_y\Psi|^{-2}\sum_{j=1}^n (\partial_{y_j}\Psi) \partial_{y_j}$, induction shows that $L_y^N$ has the form
\begin{equation*}\label{EQ:LNy} L_y^N=\frac{1}{|\nabla_y\Psi|^{4N}}\sum_{|\beta|\leq N} P_{\beta,N}\partial_y^\beta,\quad \text{where} \quad P_{\beta,N}=\sum_{|\mu|=2N} c_{\beta\mu\delta_j}(\nabla_y\Psi)^\mu \partial_y^{\delta_1}\Psi\cdots \partial_y^{\delta_N}\Psi,
\end{equation*}
$|\mu|=2N$, $|\delta_j|\geq 1$ and $\sum_{M}^N |\delta_j|+|\beta|=2N$. It follows from assumption \ref{parttwo} on $\phase$ that $|P_{\beta,N}|\leq C(|\zeta|+\bra{\xi})^{3N}.$ Now Leibniz's rule yields
\begin{equation*}\label{EQ:RII}
\begin{aligned}
& R^{I\!\!I}_{\alpha}(t,x,\xi) = \iint e^{i(x-y)\cdot\zeta} \left(1-g\left( \frac{\zeta}{\bra{\xi}}\right)\right) r_\alpha(x,\xi,\zeta) D_y^{\alpha}\left[ e^{i\Phi(x,y,\xi)}\chi(x-y) \right] \dd y \ddd\zeta \\
& = \iint e^{i\Psi(x,y,\xi,\zeta)} \left(1-g\left(\frac{\zeta}{\bra{\xi}}\right)\right) r_\alpha(t,x,\xi,\zeta) \\
& \qquad \times \sum_{\gamma_1+\gamma_2=\alpha} (e^{-i\Phi(x,y,\xi)}D_y^{\gamma_1} e^{i\Phi(x,y,\xi)})\, D^{\gamma_2}_{y}\chi(x-y) \dd y \ddd\zeta \\
& = \iint e^{i\Psi(x,y,\xi,\zeta)} |\nabla_y\Psi|^{-4N}\sum_{|\beta|\leq N} P_{\beta,N}(x,y,\xi,\zeta) \left(1-g\left(\frac{\zeta}{\bra{\xi}} \right)\right) r_\alpha(t,x,\xi,\zeta) \\
& \qquad \times \sum_{\gamma_1+\gamma_2=\alpha} \partial_y^\beta [(e^{-i\Phi(x,y,\xi)}D_y^{\gamma_1} e^{i\Phi(x,y,\xi)})\, D^{\gamma_2}_{y}\chi(x-y) ] \dd y \ddd\zeta.
\end{aligned}
\end{equation*}
It follows now from \eqref{no idea what to call 1} and (\ref{eq:rs}) that
\[
\begin{aligned}
|R^{I\!\!I}_{\alpha}(t,x,\xi)| & \leq C \int_{|\zeta|>r\bra{\xi}} \int_{|x-y|<\varepsilon} (|\zeta|+\bra{\xi})^{-N} \bra{\xi}^{|\alpha|}\, \dd y \ddd\zeta & \leq C\bra{\xi}^{|\alpha|} \int_{|\zeta|>r\bra{\xi}}|\zeta|^{-N}\ddd\zeta \\ & \leq C \bra{\xi}^{|\alpha|+n-N},
\end{aligned}
\]
which yields the desired estimate when $N>|\alpha|+n$. For the derivatives of $R_{\alpha}^{I\!\!I}(t,x,\xi)$, we can get, in a similar way to the case for $R_{\alpha}^I$, an extra power of $\zeta$, which can be taken care of by choosing $N$ large and using the fact that $|x-y|<\varepsilon.$  Therefore for all multi-indices $\beta$, $\gamma\in \mathbb{Z}_{+},$
\begin{equation}
            |\partial^{\beta}_{\xi} \partial^{\gamma}_{x} R_{\alpha}^{I\!\!I}(t,x,\xi)| \leq C_{\beta, \gamma}
\end{equation}
where the constant $C_{\beta, \gamma}$ does not depend on $t$. The proof of Proposition \ref{left composition with pseudo} is now complete. \end{proof}

To prove \ref{part1} of Theorem \ref{commutator}, we apply Proposition \ref{left composition with pseudo} with $\phase = \phase_1$, $a_t = \mu$ and $R_t = P^v_t$.  Observe that the phase function $\phase_1$ belongs to the class $\Phi^2$, which implies by the mean value theorem and homogeneity in $\xi$, that there exists a constant $C_2>0$ such that
\begin{equation}\label{eq:C2}
	\abs{\nabla_x \phase_1(x,\xi)}\leq C_2 \abs{\xi}.
\end{equation}
On the other hand, the strong non-degeneracy condition on the phase can be use to show that there exists a constant $C_1>0$
\begin{equation}\label{eq:C1}
	C_1\abs{\xi}\leq\abs{\nabla_x \phase_1(x,\xi)},
\end{equation}
see for example Proposition 1.2.4 of \cite{DSFS}. The second hypothesis of Proposition \ref{left composition with pseudo}  is a direct  consequence of the $\Phi^2$ condition on the phase $\phase_1$. We should also note that $\phase_1$ is a smooth function on the support of the amplitude $a_t = \mu$.

This shows us that $P^v_tT^{\phase_1}_\mu$ is a Fourier integral operator with phase $\phase_1$ and amplitude of the form
\[
\mu(\xi)\widehat{\theta}(t\nabla_{x}\phase(x,\xi))e^{it\nabla_{x}\phase(x,\xi)\cdot v} + \sum_{0<|\alpha| < M}\frac{t^{|\alpha|}}{\alpha!}\, \sigma_{\alpha}(t,x,\xi)+t^{\mu} r(t,x,\xi).
\]
We define $W^v_{1,t}$ to be equal to the Fourier integral operator with phase $\phase_1$ and amplitude
\begin{equation} \label{s1}
- \left(\sum_{0<|\alpha| < M}\frac{t^{|\alpha|}}{\alpha!}\, \sigma_{\alpha}(t,x,\xi)+t^{\mu} r(t,x,\xi)\right)
\end{equation}
and $V^v_t$ to be equal to the Fourier integral operator with phase $\phase_1$ and amplitude
\begin{equation} \label{r}
\mu(\xi)\left(\widehat{\theta}(t\xi)e^{it\xi\cdot v} - \widehat{\theta}(t\nabla_{x}\phase(x,\xi))e^{it\nabla_{x}\phase(x,\xi)\cdot v}\right).
\end{equation}
Clearly then $[T^{\phase_1}_\mu,P^v_t] = V^v_t + W^v_{1,t}$.

By Proposition \ref{left composition with pseudo} the amplitude \eqref{s1} is in $S^0_{0,0}(n,1)$ with semi-norms which have a dependence on $t$ of the form $t^\varepsilon$ for some small $\varepsilon > 0$. Therefore, using Theorem \ref{DW226}, $\|W^v_{1,t}(f)\|_{L^2(\R^n)}\lesssim t^\varepsilon\|f\|_{L^2(\R^n)}$. To complete the proof of \ref{part1} we also need to prove \eqref{rq}, but as this is quite long, we will first dispose of \ref{part2} and \ref{part3}.

To prove \ref{part2} of Theorem \ref{commutator}, we apply Proposition \ref{left composition with pseudo} with $a_t(\xi) = \widehat{\psi}(t\xi)e^{it\xi\cdot u}\mu(\xi)$ and $\rho$ chosen to be radial, supported on an annulus and such that $\rho(t\nabla_{x}\phase(x,\xi)) = 1$ for all $\xi \in \supp(a_t)$ and all $x \in \R^n$. This is possible again since $\phase_2$ satisfies the hypotheses of Proposition \ref{left composition with pseudo} on the support of $\mu$, as discussed previously. If we set $U^u_t$ equal to the Fourier integral operator with phase $\phase_2$ and amplitude $\widehat{\psi}(t\xi)e^{it\xi\cdot u}\mu(\xi)$ then the amplitude of $R_tU^u_t$ is of the form
\begin{align*}
& \rho(t\nabla_{x}\phase(x,\xi))\widehat{\psi}(t\xi)e^{it\xi\cdot u}\mu(\xi) + \sum_{0<|\alpha| < M}\frac{t^{|\alpha|}}{\alpha!}\, \sigma_{\alpha}(t,x,\xi)+t^{\mu} r(t,x,\xi) \\
& = \widehat{\psi}(t\xi)e^{it\xi\cdot u}\mu(\xi) + \sum_{0<|\alpha| < M}\frac{t^{|\alpha|}}{\alpha!}\, \sigma_{\alpha}(t,x,\xi)+t^{\mu} r(t,x,\xi).
\end{align*}
Therefore $R_tU^u_t = T^{\phase_2}_\mu Q^u_t - W^u_{2,t}$ and $\|W^u_{2,t}(f)\|_{L^2(\R^n)}\lesssim t^\varepsilon\|f\|_{L^2(\R^n)}$ as before. Since $a_t$ is smooth and compactly supported, $\|U^u_t(f)\|_{L^2(\R^n)}\lesssim \|f\|_{L^2(\R^n)}$ with an implicit constant that grows at most polynomially in $u$ (we can see this by applying, for example, Theorem \ref{DW226}).

To prove \ref{part3} of Theorem \ref{commutator}, first observe that by the mean-value theorem we have that
\[
|m(t,y,u,v) - m(t,x,u,v)| \lesssim |y-x|
\]
with an implicit constant that is independent of $t$, $u$ and $v$. This is because $\partial^\alpha_xm(t,x,u,v)$ is bounded. Using this, we compute
\begin{align*}
|W^{u,v}_{3,t}(f)(x)| = |[R_t,M_m](f)(x)| & \lesssim \left|\int t^{-n}\check{\rho}\brkt{\frac{x-y}{t}}\big(m(t,y,u,v) - m(t,x,u,v)\big) f(y) \dd y\right| \\
& \lesssim t\int \abs{t^{-n}\inverse{\rho}\brkt{\frac{x-y}{t}}}\frac{|x-y|}{t} |f(y)| \dd y,
\end{align*}
where
\[
	\inverse{\rho}(x)=\int \rho(\xi)e^{i x.\xi}\, \ddd \xi.
\]
The estimate $\|W^{u,v}_{3,t}(f)\|_{L^2(\R^n)}\lesssim t\|f\|_{L^2(\R^n)}$ now follows from Young's inequality since the $L^1$-norm of $x \mapsto t^{-n}\inverse{\rho}(x/t)x/t$ is independent of $t$.

Now to complete the proof of Theorem \ref{commutator} it only remains to prove \eqref{rq}. Remembering that the amplitude of $V^v_t$ is given by \eqref{r}, let us define
\[
    \sigma^v(x,\xi)=\widehat{\theta}(\xi)e^{iv\cdot\xi} - \widehat{\theta}(\nabla_x \phase_1 (x,\xi))e^{iv\cdot\nabla_x \phase_1 (x,\xi)},
\]
where recall that $\widehat{\theta}\in \mathcal{C}^\infty_0(\R^n)$ is such that $\supp \widehat{\theta} \subset \set{\abs{\xi}\leq 5}$ and that $\widehat{\theta}$ is constant on
$\set{\abs{\xi}\leq 4}$.
We want to study the validity of the quadratic estimate \eqref{rq} for the Fourier integral operator
\[
    V_t^v(f)(x)=\int \mu(\xi)\sigma^v(x,t\xi) e^{i\phase_1 (x,\xi)}\widehat{f}(\xi)\ddd\xi.
\]
Observe that
\[
    \begin{split}
        \sigma^v(x,\xi)&=\brkt{\widehat{\theta}(\xi) - \widehat{\theta}(\nabla_x \phase_1 (x,\xi))}e^{iv\cdot\xi}+\widehat{\theta}(\nabla_x \phase_1 (x,\xi))\brkt{e^{iv\cdot\xi} - e^{iv\cdot\nabla_x \phase_1 (x,\xi)}}\\
        &=\sigma_I(x,\xi)e^{iv\cdot\xi}+\widehat{\theta}(\nabla_x \phase_1 (x,\xi))\sigma_{I\!\!I}^v(x,\xi),
    \end{split}
\]
where
\begin{align*}
\sigma_I(x,\xi) & = \widehat{\theta}(\xi) - \widehat{\theta}(\nabla_x \phase_1 (x,\xi))\quad {\rm and}\\
\sigma_{I\!\!I}^v(x,\xi) & = e^{iv\cdot\xi} - e^{iv\cdot\nabla_x \phase_1 (x,\xi)}.
\end{align*}
Thus
\[
	V_t^vf=V^v_{t,I}  f+V^v_{t,I\!\!I}f,
\]
where $V^v_{t,I}$ and $V^v_{t,I\!\!I}$ are the Fourier integral operators with amplitudes $\mu(\xi)\sigma_I(x,t\xi)e^{i tv\xi}$ and $\widehat{\theta}(\nabla_x \phase_1 (x,\xi))\sigma_{I\!\!I}^v(x,\xi)\mu(\xi)$ respectively. Thus, it suffices to prove the desired quadratic estimate for $V^v_{t,I}$ and $V^v_{t,I\!\!I}$ separately. This will finally be achieved in Propositions \ref{prop:1} and \ref{prop:2} below, but to help us in this task we shall first prove some technical lemmas.

\begin{lem}\label{lemaA} For multi-indices ${\abs{\alpha}}+{\abs{\beta}}\geq 1$ and for any $N_1,\ldots, N_{\abs{\alpha}+\abs{\beta}}\in [0,\infty)$,
\begin{equation*}\label{eq:0}
	\abs{\d^\alpha_\xi \d^\beta_x \widehat{\theta}(t\nabla_x\phase_1 (x,\xi))}\lesssim \sum_{j=1}^{\abs{\alpha}+\abs{\beta}} t^j \abs{\xi}^{j-\abs{\alpha}} \p{t\xi}^{-N_j}.
\end{equation*}
\end{lem}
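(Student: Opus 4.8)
The plan is to differentiate the composite function $\widehat{\theta}(t\nabla_x\phase_1(x,\xi))$ by the multivariate Fa\`a di Bruno formula and then bound each resulting term using the $\Phi^2$ estimates on $\phase_1$ together with the compact support of $\widehat{\theta}$.

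Set $F(x,\xi) = t\nabla_x\phase_1(x,\xi) \in \R^n$, so that the object to be estimated is $\partial^\alpha_\xi\partial^\beta_x\big[\widehat{\theta}\circ F\big]$. By the chain rule for higher derivatives, this is a finite linear combination of terms of the form
\[
\big(\partial^{\kappa}\widehat{\theta}\big)\!\big(F(x,\xi)\big)\prod_{i=1}^{k}\partial^{\alpha_i}_\xi\partial^{\beta_i}_x F_{l_i}(x,\xi),
\]
where $1\leq k\leq |\alpha|+|\beta|$, $|\kappa|=k$, each index pair satisfies $|\alpha_i|+|\beta_i|\geq 1$, and $\alpha_1+\dots+\alpha_k=\alpha$, $\beta_1+\dots+\beta_k=\beta$. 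Since $F_l = t\,\partial_{x_l}\phase_1$, each factor equals $t\,\partial^{\alpha_i}_\xi\partial^{\beta_i+e_{l_i}}_x\phase_1$, and because $|\alpha_i|+|\beta_i+e_{l_i}| = |\alpha_i|+|\beta_i|+1 \geq 2$, the defining estimate of the class $\Phi^2$ (Definition \ref{Phik phases} with $k=2$) gives $\big|\partial^{\alpha_i}_\xi\partial^{\beta_i}_x F_{l_i}(x,\xi)\big| \les t\,|\xi|^{1-|\alpha_i|}$. Multiplying the $k$ factors and using $\sum_i|\alpha_i|=|\alpha|$ bounds the product by a constant times $t^{k}|\xi|^{k-|\alpha|}$.

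It remains to estimate the scalar factor $\big(\partial^{\kappa}\widehat{\theta}\big)(F(x,\xi))$. Since $\widehat{\theta}\in\mathcal{C}_0^\infty(\R^n)$, for every $N\geq 0$ we have $|(\partial^{\kappa}\widehat{\theta})(y)| \les_N \p{y}^{-N}$. Applying this at $y = t\nabla_x\phase_1(x,\xi)$ and invoking the lower bound $|\nabla_x\phase_1(x,\xi)| \geq C_1|\xi|$ from \eqref{eq:C1} (a consequence of the strong non-degeneracy of $\phase_1$), one gets $\p{t\nabla_x\phase_1(x,\xi)} \gtrsim \p{t\xi}$, hence $|(\partial^{\kappa}\widehat{\theta})(F(x,\xi))| \les_N \p{t\xi}^{-N}$ for any $N\geq 0$. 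Choosing, for each $k$, the exponent $N=N_k$ prescribed in the statement, the generic term with $k$ factors is bounded by $C\,t^{k}|\xi|^{k-|\alpha|}\p{t\xi}^{-N_k}$, and summing over $1\leq k\leq |\alpha|+|\beta|$ yields the asserted inequality.

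The only genuinely delicate point is the combinatorial bookkeeping in the Fa\`a di Bruno step: one must check that every factor $\partial^{\alpha_i}_\xi\partial^{\beta_i}_x F_{l_i}$ carries at least one derivative, so that after absorbing the extra $x$-derivative coming from $\nabla_x$ its total order is $\geq 2$ and the $\Phi^2$ estimate applies, and that the $\xi$-derivative orders of the factors sum to exactly $|\alpha|$, which is what produces the power $|\xi|^{k-|\alpha|}$. The remaining ingredients --- the rapid (in fact compactly supported) decay of $\widehat{\theta}$ and the comparison $\p{t\nabla_x\phase_1} \gtrsim \p{t\xi}$ --- are routine.
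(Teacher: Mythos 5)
Your argument is correct and proves the lemma. It differs organizationally from the paper: you invoke the multivariate Fa\`a di Bruno formula once and estimate each resulting term directly, whereas the paper proceeds by induction on $d=|\alpha|+|\beta|$, peeling off one derivative at a time, applying Leibniz's rule, and invoking the inductive hypothesis on the first factor and the $\Phi^2$ bound on the second. The two arguments rest on exactly the same analytic ingredients --- the $\Phi^2$ estimates on $\phase_1$ for derivative orders $\geq 2$, the rapid decay of derivatives of the compactly supported $\widehat{\theta}$, and the lower bound $|\nabla_x\phase_1(x,\xi)|\geq C_1|\xi|$ from \eqref{eq:C1} which yields $\p{t\nabla_x\phase_1}\gtrsim\p{t\xi}$ --- so in substance they are the same; the Fa\`a di Bruno route is arguably tidier, provided one is comfortable with the combinatorics of the multivariate chain rule (in particular that every inner factor $\partial^{\alpha_i}_\xi\partial^{\beta_i}_x F_{l_i}$ carries at least one derivative and, after absorbing the $\nabla_x$, has total order $\geq 2$, and that the $\xi$-derivative orders across factors sum to $|\alpha|$). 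The paper's induction avoids quoting Fa\`a di Bruno at the cost of some repetition. Your bookkeeping on both points is correct, so the proof stands.
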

\begin{proof} Let $d=\abs{\alpha}+\abs{\beta}$ be the order of derivatives. Observe that for any $j=1,\ldots,n$,
\begin{align*}
\abs{\d_{\xi_j} \widehat{\theta}(t\nabla_x\phase_1 (x,\xi))} & =t\abs{\p{\nabla\widehat{\theta}(t\nabla_x\phase_1 (x,\xi)), \d_{\xi_j}\nabla_x\phase_1 (x,\xi)}} \\
& \lesssim t\p{t\nabla_x\phase_1 (x,\xi)}^{-N_1}\lesssim t\p{t\xi}^{-N_1}.
\end{align*}
Similarly, we have
\[
\abs{\d_{x_j} \widehat{\theta}(t\nabla_x\phase_1 (xr,\xi))}=t\abs{\p{\nabla\widehat{\theta}(t\nabla_x\phase_1 (x,\xi)), \d_{x_j}\nabla_x\phase_1 (x,\xi)}}\lesssim t\p{t\xi}^{-N_1} \abs{\xi}.
\]
So the inequality holds for multi-indices with order $d=1$. Assume that it holds for multi-indices of order smaller than $d$. Let us prove that under this assumption it holds for those of order $d$ too, thus by induction, the lemma will be proved.

Assume that $\abs{\alpha}\geq 1$ and let $\alpha_j\neq 0$. Let $\tilde{\alpha}$ the multi-index that $\tilde{\alpha}_k=\alpha_k$ for $j\neq k$ and $\tilde{\alpha}_j=\alpha_j-1$. Then,
\[
	\begin{split}
	\d^\alpha_\xi &\d^\beta_x \widehat{\theta}(t\nabla_x\phase_1 (x,\xi))=t{\d^{\tilde{\alpha}}_\xi \d^\beta_x {\p{\nabla\widehat{\theta}(t\nabla_x\phase_1 (x,\xi)), \d_{\xi_j}\nabla_x\phase_1 (x,\xi)}}}\\
		&=t\sum_k \sum_{\alpha_1+\alpha_2=\tilde{\alpha}}\sum_{\beta_1+\beta_2=\beta} C_{\tilde{\alpha},\beta} \brkt{\d^{\alpha_1}_\xi\d^{\beta_1}_x
		(\d_k\widehat{\theta})(t\nabla_x\phase_1 (x,\xi))} \brkt{\d^{\alpha_2}_\xi\d^{\beta_2}_x
		(\d_{\xi_j,x_k}^2\phase_1 (x,\xi))}
	\end{split}
\]
Then
\[
	\begin{split}
	\abs{	\d^\alpha_\xi \d^\beta_x \widehat{\theta}(t\nabla_x\phase_1 (x,\xi))}
		&\lesssim t\sum_{\alpha_1+\alpha_2=\tilde{\alpha}}\sum_{\beta_1+\beta_2=\beta} C_{\tilde{\alpha},\beta} \sum_{j=1}^{\abs{\alpha_1}+\abs{\beta_1}} t^j \abs{\xi}^{j-\abs{\alpha_1}} \p{t\xi}^{-N_j}\abs{\xi}^{-\abs{\alpha_2}}+\\
		&\qquad +t(\d_k\widehat{\theta})(t\nabla_x\phase_1 (x,\xi)) \brkt{\d^{\tilde{\alpha}}_\xi \d^{\beta}_x  (\d_{\xi_j,x_k}^2\phase_1 (x,\xi))}\\
		&\lesssim \sum_{j=1}^{\abs{\tilde{\alpha}}+\abs{\beta}} t^{j+1} \abs{\xi}^{j-\abs{\tilde{\alpha}}} \p{t\xi}^{-N_j}+t\abs{\xi}^{1-\abs{\alpha}}\p{t\xi}^{-N_0}\\
		&\lesssim \sum_{j=2}^{\abs{\alpha}+\abs{\beta}} t^{j} \abs{\xi}^{j-\abs{\alpha}} \p{t\xi}^{-N_{j-1}}+t\abs{\xi}^{1-\abs{\alpha}}\p{t\xi}^{-N_0}.
	\end{split}
\]
For $\abs{\alpha}=0$, the result follows from a similar argument. We omit the details.
\end{proof}

Let $C_1$ and $C_2$ be the constants appearing in \eqref{eq:C1} and \eqref{eq:C2}. We are free to assume that $C_1\leq 1\leq C_2$.

\begin{lem}\label{lem:1} The following statements hold:
\begin{enumerate}[label={\upshape (\roman*)}]
	\item The function $\widehat{\theta}(\nabla_x\phase_1 (x,\xi))$ is supported in $\R^n\times \set{\abs{\xi}\leq \frac{5}{C_1}}$ and it is constant on $\R^n\times\set{\abs{\xi}\leq \frac{4}{C_2}}$.
	\item There exist $\widehat{\psi}\in \mathcal{C}^\infty_0(\R^n)$, $0<r'<4$ and $5<R'<\infty$ satisfying that $\supp \widehat{\psi} \subset \set{\abs{\xi}\leq R'}$, $\widehat{\psi}=0$ on $\set{\abs{\xi}\leq r'}$ and
\begin{equation}\label{eq:1}
    \sigma_I(x,\xi)=\sigma_I(x,\xi)\widehat{\psi}(\xi).
\end{equation}
\end{enumerate}
\begin{proof}Observe that $\frac{4}{C_2}\leq 4 \leq 5\leq \frac{5}{C_1}$. If $\abs{\xi}>\frac{5}{C_1}$, then $\abs{\nabla_x \phase_1 (x,\xi)}\geq  C_1\abs{\xi}>5$, which yields that for any $x\in \R^n$, $\widehat{\theta}(\nabla_x\phase_1 (x,\xi))=\sigma_I(x,\xi)=0$. On the other hand, if $\abs{\xi}<\frac{4}{C_2}$, then $\abs{\nabla_x \phase_1 (x,\xi)}\leq C_2 \abs{\xi}<4$, which yields that for any $x\in\R^n$, $\widehat{\theta}(\nabla_x(x,\xi))$ is constant and $\sigma_I(x,\xi)=0$.

The last assertion follows by taking $\widehat{\psi}\in\mathcal{C}^\infty_0(\R^n)$ such that is equal to one on the set $\set{\frac{4}{C_2}\leq \abs{\xi}\leq \frac{5}{C_1}}$ and it is equal to $0$ on $\set{\abs{\xi}\leq r'}$ with $r'<\frac{4}{C_2}$.
\end{proof}
\end{lem}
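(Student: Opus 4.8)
The plan is to prove both parts of Lemma~\ref{lem:1} by bookkeeping with frequency supports, using only the two-sided bound $C_1\abs{\xi}\leq\abs{\nabla_x\phase_1(x,\xi)}\leq C_2\abs{\xi}$ recorded in \eqref{eq:C1}--\eqref{eq:C2}, the normalisation $C_1\leq 1\leq C_2$, and the facts that $\widehat{\theta}$ is supported in $\set{\abs{\xi}\leq 5}$ and is constant on $\set{\abs{\xi}\leq 4}$.

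For part~(i), I would first observe the elementary chain $\tfrac{4}{C_2}\leq 4\leq 5\leq \tfrac{5}{C_1}$. If $\abs{\xi}>\tfrac{5}{C_1}$ then $\abs{\nabla_x\phase_1(x,\xi)}\geq C_1\abs{\xi}>5$, so $\widehat{\theta}(\nabla_x\phase_1(x,\xi))=0$ for every $x$; this is the claimed support statement, and since $\widehat{\theta}(\xi)=0$ there as well, it also gives $\sigma_I(x,\xi)=\widehat{\theta}(\xi)-\widehat{\theta}(\nabla_x\phase_1(x,\xi))=0$. If instead $\abs{\xi}<\tfrac{4}{C_2}$ then $\abs{\nabla_x\phase_1(x,\xi)}\leq C_2\abs{\xi}<4$, so $\widehat{\theta}(\nabla_x\phase_1(x,\xi))$ equals the constant value of $\widehat{\theta}$ on $\set{\abs{\xi}\leq 4}$, independently of $x$; this gives the constancy statement, and since $\widehat{\theta}(\xi)$ equals the same constant, $\sigma_I(x,\xi)=0$ there too.

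For part~(ii), the content of part~(i) is that, for every $x$, the function $\xi\mapsto\sigma_I(x,\xi)$ is supported in the fixed annulus $\set{\tfrac{4}{C_2}\leq\abs{\xi}\leq\tfrac{5}{C_1}}$. It then suffices to fix once and for all a function $\widehat{\psi}\in\mathcal{C}^\infty_0(\R^n)$ which equals $1$ on that annulus, vanishes on $\set{\abs{\xi}\leq r'}$ for some $r'<\tfrac{4}{C_2}$, and is supported in $\set{\abs{\xi}\leq R'}$ for some $R'>\tfrac{5}{C_1}$; then $\sigma_I(x,\xi)\widehat{\psi}(\xi)=\sigma_I(x,\xi)$ pointwise, which is \eqref{eq:1}. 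The inequalities $r'<\tfrac{4}{C_2}\leq 4$ and $R'>\tfrac{5}{C_1}\geq 5$, coming from $C_1\leq 1\leq C_2$, then automatically yield $0<r'<4$ and $5<R'<\infty$.

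There is no real obstacle here: the lemma is a purely elementary support-tracking statement, and the only point requiring any care is invoking the normalisation $C_1\leq 1\leq C_2$ so that the thresholds $r'$ and $R'$ fall on the correct sides of $4$ and $5$. The bounds \eqref{eq:C1}--\eqref{eq:C2} are already available, \eqref{eq:C2} from the mean value theorem and homogeneity applied to $\phase_1\in\Phi^2$, and \eqref{eq:C1} from the strong non-degeneracy of $\phase_1$ as in Proposition~1.2.4 of \cite{DSFS}.
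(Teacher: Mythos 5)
Your proposal is correct and follows essentially the same route as the paper: both arguments reduce the lemma to frequency-support bookkeeping with the two-sided bound $C_1|\xi|\leq|\nabla_x\phase_1(x,\xi)|\leq C_2|\xi|$, the normalisation $C_1\leq 1\leq C_2$, and the support and constancy properties of $\widehat{\theta}$, and then choose $\widehat{\psi}$ as a bump equal to $1$ on the annulus $\set{\tfrac{4}{C_2}\leq|\xi|\leq\tfrac{5}{C_1}}$. The extra sentences you add (explicitly checking $\widehat{\theta}(\xi)=0$ or $\widehat{\theta}(\xi)$ constant in the two regimes so that $\sigma_I$ vanishes there) only make explicit what the paper leaves implicit.
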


Define $\sigma_t(x,\xi)=\sigma_I(x,t\xi)$ and let $T_{\sigma_t}$ be the Fourier integral operator with amplitude $\sigma_t$ and phase function $\phase_1 (x,\xi)$.
\begin{lem} If $t\leq 1$, $\sigma_t\in S^0_{1,0}(n,1)$ uniformly on $t$. That is, for any multi-indices $\alpha,\beta$
\begin{equation}\label{eq:3}
    \sup_{0<t<1}\sup_x \abs{\d^\beta_x \d^\alpha_\xi \sigma_t(x,\xi)}\lesssim \p{\xi}^{-\abs{\alpha}}.
\end{equation}
As a consequence
\begin{equation}\label{eq:2}
    \sup_{0<t<1}\norm{T_{\sigma_t}}_{L^2\to L^2}=\mathfrak{c}<+\infty.
\end{equation}
\end{lem}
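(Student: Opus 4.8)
The plan is to establish the symbol estimate \eqref{eq:3} by decomposing
\[
\sigma_t(x,\xi) = \widehat{\theta}(t\xi) - \widehat{\theta}(t\nabla_x\phase_1(x,\xi))
\]
and bounding the derivatives of the two terms separately; then \eqref{eq:2} follows at once from Theorem \ref{DW226}. When $|\alpha|+|\beta| = 0$ both terms are dominated by $\|\widehat{\theta}\|_{L^\infty}$, so I may assume $|\alpha|+|\beta|\geq 1$. The first term is easy: $\widehat{\theta}(t\xi)$ has vanishing $x$-derivatives, and $\partial^\alpha_\xi\widehat{\theta}(t\xi) = t^{|\alpha|}(\partial^\alpha\widehat{\theta})(t\xi)$ is supported where $|t\xi|\leq 5$; since $t\leq 1$, separating the cases $|\xi|\leq 1$ and $|\xi|\geq 1$ gives $t^{|\alpha|}\lesssim\langle\xi\rangle^{-|\alpha|}$, so this term belongs to $S^0_{1,0}(n,1)$ uniformly in $t$.

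The term $\widehat{\theta}(t\nabla_x\phase_1(x,\xi))$ carries the weight of the proof, and its derivatives are precisely the object bounded in Lemma \ref{lemaA}. The point that must be used to keep the factor $|\xi|^{j-|\alpha|}$ appearing there under control when $j<|\alpha|$ is a support reduction: by \eqref{eq:C2}, $t|\xi|\leq 4/C_2$ forces $|t\nabla_x\phase_1(x,\xi)|\leq 4$, where $\widehat{\theta}$ is constant, so for $|\alpha|+|\beta|\geq 1$ the derivative $\partial^\beta_x\partial^\alpha_\xi\widehat{\theta}(t\nabla_x\phase_1(x,\xi))$ is supported in $\{t|\xi|\geq 4/C_2\}$. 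Applying Lemma \ref{lemaA} with each $N_j = j$ on that region, and using $(t|\xi|)^j\langle t\xi\rangle^{-j}\leq 1$, yields
\[
\bigl|\partial^\alpha_\xi\partial^\beta_x\widehat{\theta}(t\nabla_x\phase_1(x,\xi))\bigr| \lesssim |\xi|^{-|\alpha|}\sum_{j=1}^{|\alpha|+|\beta|}(t|\xi|)^j\langle t\xi\rangle^{-j}\lesssim |\xi|^{-|\alpha|}\approx\langle\xi\rangle^{-|\alpha|},
\]
the last equivalence because $|\xi|\geq 4/(C_2 t)\geq 4/C_2$ on that set, as $t\leq 1$. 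Together with the estimate for the first term this proves \eqref{eq:3} with constants independent of $t\in(0,1)$.

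For the consequence, \eqref{eq:3} is exactly the assertion that $\sigma_t\in S^0_{1,0}(n,1)$ with seminorms uniform in $t\in(0,1)$; since $\phase_1\in\Phi^2$ is strongly non-degenerate, Theorem \ref{DW226} applied with $\rho = 1$ and $\delta = 0$ (so that $m = \min\{0,n/2\} = 0$) bounds $\|T_{\sigma_t}\|_{L^2\to L^2}$ by a constant depending only on $n$, the phase data, and finitely many seminorms of $\sigma_t$, all of which are uniform in $t$; this gives \eqref{eq:2}. The only genuinely delicate step is the support reduction for the second term, and the rest of the argument is routine bookkeeping.
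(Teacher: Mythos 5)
Your proof is correct, and it reaches the estimate by a slightly more direct route than the paper. The paper's proof proceeds in two stages: it first shows $\sigma_I\in S^0_{1,0}(n,1)$ by exploiting the fact (Lemma \ref{lem:1}\,(ii)) that $\sigma_I$ is supported in the fixed annulus $\{r'\leq|\xi|\leq R'\}$ -- on that compact set the troublesome factor $|\xi|^{j-|\alpha|}$ from Lemma \ref{lemaA} is harmlessly bounded -- and then deduces the uniform-in-$t$ estimate for $\sigma_t(x,\xi)=\sigma_I(x,t\xi)$ by the elementary scaling inequality $\p{t\xi}\geq t\p{\xi}$ for $t\leq 1$, so that $t^{|\alpha|}\p{t\xi}^{-|\alpha|}\leq\p{\xi}^{-|\alpha|}$. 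You instead estimate $\sigma_t$ directly, splitting it into the two pieces $\widehat{\theta}(t\xi)$ and $\widehat{\theta}(t\nabla_x\phase_1(x,\xi))$, and for the second piece you observe that its nonzero-order derivatives are supported in $\{t|\xi|\geq 4/C_2\}$ because $\widehat{\theta}$ is constant on $\{|\zeta|\leq 4\}$ and $|t\nabla_x\phase_1|\leq C_2 t|\xi|$. This is essentially the same geometric fact that underlies Lemma \ref{lem:1}, but invoked at scale $t$ rather than at scale $1$; it lets you apply Lemma \ref{lemaA} with $N_j=j$ to produce $|\xi|^{-|\alpha|}\sum_j(t|\xi|/\p{t\xi})^j\lesssim|\xi|^{-|\alpha|}\approx\p{\xi}^{-|\alpha|}$ on that support, and the first piece is handled by the trivial $t^{|\alpha|}\lesssim\p{\xi}^{-|\alpha|}$ on $\{t|\xi|\leq 5\}$. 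What the paper's route buys is a clean intermediate statement ($\sigma_I\in S^0_{1,0}$) plus reuse of Lemma \ref{lem:1}; what your route buys is that it avoids the scaling step and the auxiliary $\widehat{\psi}$ cutoff entirely. The passage from \eqref{eq:3} to \eqref{eq:2} via Theorem \ref{DW226} with $\rho=1$, $\delta=0$ is the same in both.
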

\begin{proof} Suppose first that we have shown that $\sigma_I\in S^0_{1,0}(n,1)$. Observe that, for $t<1$, $\p {t\xi}\geq t\p{\xi}$. Then,
\[
    \abs{\d^\beta_x \d^\alpha_\xi \sigma_t(x,\xi)}=\abs{t^{\abs{\alpha}}(\d^\beta_x \d^\alpha_\xi \sigma_I) (x,t\xi)}
    \leq C t^{\abs{\alpha}}\p{t\xi}^{-\abs{\alpha}}\leq \p{\xi}^{-\abs{\alpha}}.
\]

To prove that $\sigma_I\in S^0_{1,0}(n,1)$, it is sufficient to prove that $\widehat{\theta}(\nabla_x \phase_1 (x,\xi))\widehat{\psi}(\xi)\in S^0_{1,0}(n,1)$ with $\psi$ as in \eqref{eq:1}, since we can then see $\widehat{\theta}(\xi)\widehat{\psi}(\xi)\in S^0_{1,0}(n,1)$ as a special case of this. The result will follow if we prove that, for any pair of multi-indices $\alpha,\beta$,
\[
    \sup_{r'\leq \abs{\xi}\leq R'} \p{\xi}^{\abs{\alpha}}\sup_x \abs{\d^\beta_x \d^\alpha_\xi \brkt{\widehat{\theta}(\nabla_x \phase_1 (x,\xi))}}<+\infty.
\]
But Lemma \ref{lemaA} yields that, for $N\geq \abs{\alpha}$,
\[
    \abs{\d^\beta_x \d^\alpha_\xi \brkt{\widehat{\theta}(\nabla_x \phase_1 (x,\xi))}}
    \leq C_{N,\alpha,\beta} H_{\alpha,\beta}(r',R') \p{\xi}^{-N},
\]
where
\[
    H_{\alpha,\beta}(r,R)=\sum_{j=1}^{\abs{\alpha}} r^{1-j}+\sum_{j=1}^{\abs{\beta}}R^j.
\]

The last assertion of the lemma follows from \eqref{eq:3} and Theorem \ref{DW226}.
\end{proof}

\begin{prop}\label{prop:1} For any $f\in L^2$,
\[
    \sup_{v\in \R^n}\brkt{\int_0^1 \norm{V^v_{t,I} f}^2_{L^2}\, \frac{\dd t}{t}}^{\frac 1 2}\lesssim \norm{f}_{L^2}.
\]
\end{prop}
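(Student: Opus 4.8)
The plan is to reduce the quadratic estimate of Proposition~\ref{prop:1} to Plancherel's theorem, exploiting two structural features of the amplitude $\mu(\xi)\sigma_I(x,t\xi)e^{itv\cdot\xi}$ of $V^v_{t,I}$: in the frequency variable it is supported in a $t$-dilate of a fixed annulus bounded away from the origin, which will render the $\frac{\dd t}{t}$-integration harmless; and its entire dependence on $v$ is carried by the unimodular factor $e^{itv\cdot\xi}$, which disappears once one squares the $L^2$-norm, so that uniformity in $v$ comes for free.

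First I would rewrite $V^v_{t,I}$ as a composition. By \eqref{eq:1} one has $\sigma_I(x,\xi)=\sigma_I(x,\xi)\widehat\psi(\xi)$, with $\widehat\psi$ as in Lemma~\ref{lem:1}~(ii); replacing $\xi$ by $t\xi$, the amplitude of $V^v_{t,I}$ equals $\sigma_t(x,\xi)\,\mu(\xi)\widehat\psi(t\xi)e^{itv\cdot\xi}$, where $\sigma_t(x,\xi)=\sigma_I(x,t\xi)$ is the symbol introduced just before the present proposition. Denoting by $G^v_t$ the Fourier multiplier operator with symbol $\mu(\xi)\widehat\psi(t\xi)e^{itv\cdot\xi}$, we obtain
\[
V^v_{t,I}f = T_{\sigma_t}\bigl(G^v_t f\bigr),
\]
where $T_{\sigma_t}$ is the Fourier integral operator with amplitude $\sigma_t$ and phase $\phase_1$. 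By \eqref{eq:2} we have $\|T_{\sigma_t}\|_{L^2\to L^2}\le\mathfrak{c}$ uniformly for $t\in(0,1)$, and hence $\|V^v_{t,I}f\|_{L^2}\le\mathfrak{c}\,\|G^v_t f\|_{L^2}$.

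Next I would apply Plancherel's theorem (in the paper's normalisation). Since $|\mu|\lesssim1$ and $|e^{itv\cdot\xi}|=1$,
\[
\|G^v_t f\|_{L^2(\R^n)}^2 = \int_{\R^n}|\mu(\xi)|^2|\widehat\psi(t\xi)|^2|\widehat f(\xi)|^2\,\ddd\xi \lesssim \int_{\R^n}|\widehat\psi(t\xi)|^2|\widehat f(\xi)|^2\,\ddd\xi,
\]
which is independent of $v$. Integrating in $t$ over $(0,1)$ and using Tonelli's theorem,
\[
\int_0^1\|V^v_{t,I}f\|_{L^2(\R^n)}^2\,\frac{\dd t}{t} \lesssim \mathfrak{c}^2\int_{\R^n}\Bigl(\int_0^1|\widehat\psi(t\xi)|^2\,\frac{\dd t}{t}\Bigr)|\widehat f(\xi)|^2\,\ddd\xi.
\]
By Lemma~\ref{lem:1}~(ii), $\supp\widehat\psi\subset\{\,r'\le|\xi|\le R'\,\}$ with $0<r'<R'<\infty$, so for every $\xi\neq0$ the factor $\widehat\psi(t\xi)$ vanishes unless $r'/|\xi|\le t\le R'/|\xi|$, whence $\int_0^\infty|\widehat\psi(t\xi)|^2\,\frac{\dd t}{t}\le\|\widehat\psi\|_{L^\infty}^2\log(R'/r')$, a bound independent of $\xi$. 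Therefore $\int_0^1\|V^v_{t,I}f\|_{L^2(\R^n)}^2\,\frac{\dd t}{t}\lesssim\|\widehat f\|_{L^2}^2=\|f\|_{L^2(\R^n)}^2$ with an implicit constant independent of $v$; taking the supremum over $v\in\R^n$ yields the proposition (first for Schwartz $f$, then for arbitrary $f\in L^2$ by density).

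I do not expect a genuine obstacle here; the one point to be careful about is not to estimate $\|V^v_{t,I}\|_{L^2\to L^2}$ by absorbing $e^{itv\cdot\xi}$ into the amplitude, since its $\xi$-derivatives contribute factors of size $(t|v|)^{|\alpha|}\sim|v|^{|\alpha|}\p{\xi}^{-|\alpha|}$ on the support of the amplitude, producing a bound that grows polynomially in $v$ --- too weak for the asserted uniformity. Keeping $e^{itv\cdot\xi}$ attached to $f$, as above, sidesteps this issue altogether.
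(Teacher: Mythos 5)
Your proof is correct and is essentially the paper's own argument: both factor the amplitude of $V^v_{t,I}$ through Lemma~\ref{lem:1}~(ii) as $\sigma_t(x,\xi)\cdot\mu(\xi)\widehat\psi(t\xi)e^{itv\cdot\xi}$, invoke the uniform $L^2$-bound \eqref{eq:2} on $T_{\sigma_t}$, and finish with the Littlewood--Paley square-function estimate for a multiplier supported in a $t$-dilated annulus (the paper phrases the multiplier as $f\mapsto\psi_t^v*Sf$ with a translation $\tau_{tv}$, while you phrase it directly as $G^v_t$, but the computation is identical).
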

\begin{proof} Observe that by Lemma \ref{lem:1}, $T_{\sigma_t} \brkt{\tau_{tv}f}(x)=T_{\sigma_t} \brkt{\psi_t^v*f}(x)$, where $\widehat{\psi_t^v}(\xi)=\widehat{\psi}(t\xi)e^{it v\xi}$ and  $\tau_{tv}f(x)=f(x-tu)$. Then, \eqref{eq:2} and the properties on $\psi$ yields
\[
    \begin{split}
    \int_0^1\int_{\R^n} \abs{T_{\sigma_t} \brkt{\tau_{tv}f}(x)}^2\, \dd x \frac{\dd t}{t}&\leq
    \mathfrak{c}\int_0^1 \int_{\R^n} \abs{\psi_t^v*f(x)}^2\, \dd x \frac{\dd t}{t}\\
    &\leq \mathfrak{c}\int_0^\infty \int_{\R^n} \abs{\psi_t^v*f(x)}^2\, \dd x \frac{\dd t}{t}\lesssim \mathfrak{c}\norm{f}_{L^2}^2.
    \end{split}
\]
Finally observe that  $V^v_{t,I}f=T_{\sigma_t}  \brkt{\tau_{tv} Sf}$ where $Sf(x)=\int \mu(\xi) \widehat{f}(\xi) e^{ix \xi}\ddd\xi$, which is a bounded operator on $L^2$, so the proposition is proved.
\end{proof}


We now turn our attention to $V_{t,I\!\!I}^v$. We will make use of the following lemmas.
\begin{lem}\label{lemaB} For any multi-indices $\alpha$ and $\beta$,
\[
	\abs{\d^\alpha_\xi \d^\beta_x \sigma_{I\!\!I}^v(x,\xi)}\lesssim \sum_{j=1}^{\max\brkt{\abs{\alpha}+\abs{\beta},1}} \abs{v}^j \abs{\xi}^{j-\abs{\alpha}}.
\]
\end{lem}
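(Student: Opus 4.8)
The plan is to write $\sigma_{I\!\!I}^v(x,\xi)=e^{iv\cdot\xi}-e^{iv\cdot\nabla_x\phase_1(x,\xi)}$ and to differentiate the two exponentials separately, disposing of the trivial case $\alpha=\beta=\mathbf{0}$ by hand. For $\abs{\alpha}+\abs{\beta}=0$ the asserted bound is just $\abs{\sigma_{I\!\!I}^v(x,\xi)}\lesssim\abs{v}\abs{\xi}$ (since $\max(0,1)=1$), and this follows at once from $\abs{e^{ia}-e^{ib}}\leq\abs{a-b}$ together with the pointwise bound $\abs{\nabla_x\phase_1(x,\xi)}\leq C_2\abs{\xi}$ recorded in \eqref{eq:C2}:
\[
\abs{e^{iv\cdot\xi}-e^{iv\cdot\nabla_x\phase_1(x,\xi)}}\leq\abs{v\cdot\bigl(\xi-\nabla_x\phase_1(x,\xi)\bigr)}\leq(1+C_2)\abs{v}\abs{\xi}.
\]

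For $\abs{\alpha}+\abs{\beta}\geq 1$ I would first note that $\d^\alpha_\xi\d^\beta_x e^{iv\cdot\xi}$ vanishes unless $\beta=\mathbf{0}$, in which case it equals $(iv)^\alpha e^{iv\cdot\xi}$, of modulus $\abs{v}^{\abs{\alpha}}=\abs{v}^{\abs{\alpha}}\abs{\xi}^{\,\abs{\alpha}-\abs{\alpha}}$; this is precisely the $j=\abs{\alpha}$ term on the right-hand side. For the remaining contribution I would apply the Fa\`a di Bruno formula (equivalently, iterate the product and chain rules) to the composition of $z\mapsto e^{iz}$ with the scalar function $h(x,\xi)=v\cdot\nabla_x\phase_1(x,\xi)$, thereby expressing $\d^\alpha_\xi\d^\beta_x e^{ih(x,\xi)}$ as a finite linear combination of terms
\[
i^k\,e^{ih(x,\xi)}\prod_{\ell=1}^{k}\d^{\alpha_\ell}_\xi\d^{\beta_\ell}_x h(x,\xi),\qquad 1\leq k\leq\abs{\alpha}+\abs{\beta},
\]
with the $(\alpha_\ell,\beta_\ell)$ ranging over nonzero multi-index pairs satisfying $\sum_\ell\alpha_\ell=\alpha$ and $\sum_\ell\beta_\ell=\beta$.

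The key step is the bound on each factor: $\abs{\d^{\alpha_\ell}_\xi\d^{\beta_\ell}_x h(x,\xi)}=\abs{v\cdot\d^{\alpha_\ell}_\xi\d^{\beta_\ell}_x\nabla_x\phase_1(x,\xi)}\lesssim\abs{v}\abs{\xi}^{1-\abs{\alpha_\ell}}$ for $\xi\neq 0$. This is where the hypothesis $\phase_1\in\Phi^2$ enters: the derivatives of $\phase_1$ that occur are $\d^{\alpha_\ell}_\xi\d^{\beta_\ell}_x\d_{x_m}\phase_1$, which carry $\abs{\alpha_\ell}$ derivatives in $\xi$ and $\abs{\beta_\ell}+1$ in $x$, hence have total order $\abs{\alpha_\ell}+\abs{\beta_\ell}+1\geq 2$ because $\abs{\alpha_\ell}+\abs{\beta_\ell}\geq 1$, so Definition \ref{Phik phases} gives $\abs{\xi}^{-1+\abs{\alpha_\ell}}\abs{\d^{\alpha_\ell}_\xi\d^{\beta_\ell}_x\d_{x_m}\phase_1(x,\xi)}\lesssim 1$. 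Multiplying the $k$ factors then produces $\abs{v}^k\abs{\xi}^{k-\sum_\ell\abs{\alpha_\ell}}=\abs{v}^k\abs{\xi}^{k-\abs{\alpha}}$, and summing over $1\leq k\leq\abs{\alpha}+\abs{\beta}$, while absorbing the contribution of $e^{iv\cdot\xi}$ into the $j=\abs{\alpha}$ summand, yields $\sum_{j=1}^{\abs{\alpha}+\abs{\beta}}\abs{v}^j\abs{\xi}^{j-\abs{\alpha}}$, which is the claim.

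I do not expect a genuine obstacle here: the only thing demanding care is the multi-index bookkeeping in the Fa\`a di Bruno expansion, together with the elementary but decisive observation that the single $x$-derivative supplied by $\nabla_x$ raises every derivative of $\phase_1$ into the total-order-$\geq 2$ regime in which the $\Phi^2$ estimates are available.
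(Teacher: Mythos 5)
Your proof is correct and follows essentially the same route as the paper's: split $\sigma_{I\!\!I}^v$ into the two exponentials, dispatch the zeroth-order case with the Lipschitz bound on $e^{i\cdot}$ together with \eqref{eq:C2}, and for higher derivatives expand the composition and bound each factor $\d^{\alpha_\ell}_\xi\d^{\beta_\ell}_x\nabla_x\phase_1$ by $|\xi|^{1-|\alpha_\ell|}$ using the $\Phi^2$ estimates, noting the extra $x$-derivative from $\nabla_x$ lifts the order to at least $2$. The paper carries this out by the induction of Lemma~\ref{lemaA} whereas you invoke Fa\`a di Bruno directly; these are the same argument in different packaging.
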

\begin{proof} For $\alpha=\beta=0$,
\[
	\abs{e^{iv\cdot\xi}-e^{iv\cdot\nabla_x \phase_1 (x,\xi)}}\leq \abs{v}\abs{\xi-\nabla_x \phase_1 (x,\xi)}\lesssim  \abs{v}\abs{\xi}.
\]
With a similar argument as that of the proof of Lemma \ref{lemaA}, we prove that, for $\abs{\alpha}+\abs{\beta}\geq 1$,
\begin{equation*}\label{eq:10}
	\abs{\d^\alpha_\xi \d^\beta_x e^{i v\cdot \nabla_x\phase_1 (x,\xi)}}\lesssim \sum_{j=1}^{\abs{\alpha}+\abs{\beta}}  \abs{v}^j \abs{\xi}^{j-\abs{\alpha}}.
\end{equation*}
A similar estimate holds for $e^{iv.\xi}$. From these estimates, the lemma follows.
\end{proof}

\begin{lem}\label{lem1}
Let $t\leq 1$.  Let
\[
    a_{t}(x,\xi)=\mu(\xi)\widehat{\theta}(t\nabla_x\phase_1 (x,\xi))\sigma^{v}_{I\!\!I}(x,t\xi).
\]
Then $a_t \in S^0_{0,0}(n,1)$ and
\begin{equation}\label{eq:key1}
    \sup_{0<t<1}\abs{\d^\alpha_x\d^\beta_\xi a_{t}(x,\xi)}\lesssim P_{\alpha,\beta}(\abs{tv}\abs{\xi}).
\end{equation}
where $P_{\alpha,\beta}(r)= \sum_{j=1}^{\min(\abs{\beta}+\abs{\alpha},1)}r^j$.
\end{lem}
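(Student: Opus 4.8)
The plan is to prove Lemma~\ref{lem1} by a direct Leibniz-rule computation combining Lemmas~\ref{lemaA} and~\ref{lemaB}, after first using the homogeneity of $\phase_1$ to rescale. Since $\phase_1$ is positively homogeneous of degree $1$ in its second variable, $\nabla_x\phase_1(x,t\xi)=t\nabla_x\phase_1(x,\xi)$, which gives the scaling identity $\sigma^{v}_{I\!\!I}(x,t\xi)=e^{itv\cdot\xi}-e^{itv\cdot\nabla_x\phase_1(x,\xi)}=\sigma^{tv}_{I\!\!I}(x,\xi)$. Hence Lemma~\ref{lemaB}, applied with the parameter $v$ replaced by $tv$, controls every $x$- and $\xi$-derivative of the factor $(x,\xi)\mapsto\sigma^{v}_{I\!\!I}(x,t\xi)$ by a finite sum of terms $\abs{tv}^{j}\abs{\xi}^{j-\ell}$, where $\ell$ is the number of $\xi$-derivatives and $j$ ranges up to the total order of the derivative (or up to $1$). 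On the other side, Lemma~\ref{lemaA}, with the free exponents $N_j$ there chosen $\geq j$, shows that $\widehat{\theta}(t\nabla_x\phase_1(x,\xi))$ satisfies estimates of $S^0_{1,0}(n,1)$-type on $\set{\abs{\xi}\geq 1}$ uniformly in $t\in(0,1)$ — indeed each summand $t^{j}\abs{\xi}^{j-\ell}\p{t\xi}^{-N_j}$ equals $\abs{\xi}^{-\ell}\,\abs{t\xi}^{j}\p{t\xi}^{-N_j}\leq\abs{\xi}^{-\ell}$ since $N_j\geq j$. Finally $\mu\in S^0_{1,0}(n,1)$ trivially and $\supp\mu\subset\set{\abs{\xi}\geq 5}$, so everywhere on $\supp a_t$ one has $\abs{\xi}\geq 5$.

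The main step is the refined estimate~\eqref{eq:key1}. I would expand $\d^\alpha_x\d^\beta_\xi a_t$ by Leibniz's rule into finitely many terms, each a product of a bounded $\xi$-derivative of $\mu$, a derivative of $\widehat{\theta}(t\nabla_x\phase_1)$ contributing a factor $\lesssim\abs{\xi}^{-k}$ ($k$ its number of $\xi$-derivatives), and a derivative of $\sigma^{v}_{I\!\!I}(\cdot,t\,\cdot)$ contributing $\lesssim\sum_{j}\abs{tv}^{j}\abs{\xi}^{j-\ell}$ ($\ell$ its number of $\xi$-derivatives, $j$ at most the order of the whole derivative, or $1$). Using $\abs{\xi}\geq 5$ on $\supp a_t$ — which renders every negative power of $\abs{\xi}$ harmless and allows one to bound each $\abs{tv}^{j}\abs{\xi}^{\,j-k-\ell}$ by $(\abs{tv}\abs{\xi})^{j}$ (treating the cases $j\geq k+\ell$ and $j<k+\ell$ separately) — and then summing the finitely many contributions, whose orders are all $\leq\abs{\alpha}+\abs{\beta}$, one arrives at $\abs{\d^\alpha_x\d^\beta_\xi a_t(x,\xi)}\lesssim P_{\alpha,\beta}(\abs{tv}\abs{\xi})$ uniformly in $t\in(0,1)$, which is~\eqref{eq:key1}. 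This power-counting is the only place where any care is needed, and it is entirely routine once the constraint $\abs{\xi}\geq 5$ is in hand.

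It then remains to read off that $a_t\in S^0_{0,0}(n,1)$. On $\supp a_t$ the factor $\widehat{\theta}(t\nabla_x\phase_1(x,\xi))$ is non-zero, so $\abs{t\nabla_x\phase_1(x,\xi)}\leq 5$; the lower bound~\eqref{eq:C1} then forces $\abs{t\xi}\leq 5/C_1$, and therefore $\abs{tv}\abs{\xi}=\abs{v}\,\abs{t\xi}\leq 5\abs{v}/C_1$ there. Substituting this into~\eqref{eq:key1} gives $\sup_{0<t<1}\sup_{x,\xi}\abs{\d^\alpha_x\d^\beta_\xi a_t(x,\xi)}\lesssim P_{\alpha,\beta}(5\abs{v}/C_1)<\infty$, so $a_t$ lies in $S^0_{0,0}(n,1)$ uniformly in $t\in(0,1)$ with semi-norms growing at most polynomially in $v$, which is everything the rest of the argument requires. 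I do not anticipate a genuine obstacle here: the only thing to watch is keeping the powers of $\abs{\xi}$ and $\abs{tv}$ aligned in the Leibniz expansion, and remembering that the cut-off $\mu$ keeps $\abs{\xi}$ bounded away from $0$.
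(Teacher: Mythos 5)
Your proof is correct and takes essentially the same approach as the paper: rewrite $\sigma^v_{I\!\!I}(x,t\xi)=\sigma^{tv}_{I\!\!I}(x,\xi)$ by homogeneity, apply Leibniz's rule, bound the $\widehat{\theta}(t\nabla_x\phase_1)$ factor via Lemma~\ref{lemaA} and the $\sigma^{tv}_{I\!\!I}$ factor via Lemma~\ref{lemaB}, absorb the negative powers of $\abs{\xi}$ using $\abs{\xi}\geq 5$ on $\supp\mu$, and read off the $S^0_{0,0}$ membership from the support constraint $\abs{t\xi}\leq 5/C_1$ forced by the $\widehat\theta$ factor. The only cosmetic point is that what you (and the paper's statement) write as $\min(\abs{\alpha}+\abs{\beta},1)$ in the upper limit of $P_{\alpha,\beta}$ is evidently a misprint for $\max$, as your own power-counting shows.
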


\begin{proof}Observe that $\sigma_{I\!\!I}^v(x,t\xi)=\sigma_{I\!\!I}^{tv}(x,\xi)$ and also that $a_{t}$ is supported in
 \[
    D_{t}=\set{\abs{\xi}\leq \frac{5}{t C_1}}\cap \set{\abs{\xi}\geq 1}.
 \]
Lemma \ref{lemaA} and Lemma \ref{lemaB} yield that for any $\xi\in D_{t}$,
\[
    \abs{\d^\alpha_\xi \d^\beta_x \theta(t\nabla_x\phase_1 (x,\xi))}\lesssim \sum_{j=1}^{\abs{\beta}+\abs{\alpha}} \abs{\xi}^{-\abs{\alpha}}
    \abs{t\xi}^j\lesssim 1,
\]
and
\[
    \abs{\d^\alpha_\xi \d^\beta_x \sigma_{I\!\!I}^{tv}(x,\xi)}\lesssim \sum_{j=1}^{\min(\abs{\beta}+\abs{\alpha},1)}\brkt{\abs{tv}\abs{\xi}}^j. 
\]
Thus, Leibniz's formula yields \eqref{eq:key1}.
\end{proof}

\begin{lem} \label{lem2} Let $t\leq 1$ and let $0<s<\infty$. Let $\psi\in \mathcal{C}^\infty(\R^n)$ such that $\widehat{\psi}$ is supported in $\set{\frac{5}{C_1}\leq\abs{\xi}\leq \frac{20}{C_1}}$ and such that $\int_0^\infty |\widehat{\psi}(t\xi)|^2 \frac{\dd t}{t} = 1$ for $\xi \neq 0$. Consider
\[
    a_{s,t}(x,\xi)=\mu(\xi)\widehat{\psi}(s\xi)\widehat{\theta}(t\nabla_x\phase_1 (x,\xi))\sigma^{v}_{I\!\!I}(x,t\xi).
\]
Then
\begin{equation}\label{eq:key}
    \abs{\d^\alpha_x\d^\beta_\xi a_{s,t}(x,\xi)}\lesssim P_{\alpha,\beta}(\abs{v}) \min\brkt{\frac{s}{t},\frac{t}{s}}.
\end{equation}
where $P_{\alpha,\beta}(r)= \sum_{j=1}^{\min(\abs{\beta}+\abs{\alpha},1)}r^j$.
\end{lem}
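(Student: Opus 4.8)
The plan rests on two observations. First, the $\xi$-supports of the cut-offs $\widehat{\psi}(s\xi)$ and $\widehat{\theta}(t\nabla_x\phase_1(x,\xi))$ are disjoint unless $t\leq s$. Indeed, if $\widehat{\psi}(s\xi)\neq 0$ then $s|\xi|\geq 5/C_1$, so $|\xi|\geq 5/(C_1 s)$, and by \eqref{eq:C1} one has $|t\nabla_x\phase_1(x,\xi)|\geq tC_1|\xi|\geq 5t/s$, which is $>5$ as soon as $t>s$; since $\supp\widehat{\theta}\subset\set{|\xi|\leq 5}$, this forces $\widehat{\theta}(t\nabla_x\phase_1(x,\xi))=0$. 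Hence $a_{s,t}\equiv 0$ whenever $s<t$, and in that range $\min(s/t,t/s)=s/t>0$, so \eqref{eq:key} holds trivially (every derivative of $a_{s,t}$ vanishes identically).

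It remains to treat $s\geq t$, where $\min(s/t,t/s)=t/s\leq 1$. Writing $a_{s,t}(x,\xi)=\widehat{\psi}(s\xi)\,a_t(x,\xi)$ with $a_t$ the amplitude of Lemma \ref{lem1}, and noting that all $x$-derivatives fall on $a_t$, Leibniz's rule in $\xi$ gives
\[
\partial^\alpha_x\partial^\beta_\xi a_{s,t}(x,\xi)=\sum_{\beta_1+\beta_2=\beta}\binom{\beta}{\beta_1}\,(\partial^{\beta_1}_\xi\widehat{\psi}(s\xi))\,(\partial^\alpha_x\partial^{\beta_2}_\xi a_t(x,\xi)).
\]
On the support of $\mu(\xi)\widehat{\psi}(s\xi)$ we have $5\leq|\xi|\leq 20/(C_1 s)$, hence $s\leq 4/C_1$; the first bound yields $|\partial^{\beta_1}_\xi\widehat{\psi}(s\xi)|=s^{|\beta_1|}|(\partial^{\beta_1}\widehat{\psi})(s\xi)|\lesssim 1$, and the second, combined with \eqref{eq:key1} of Lemma \ref{lem1}, yields $|\partial^\alpha_x\partial^{\beta_2}_\xi a_t(x,\xi)|\lesssim P_{\alpha,\beta_2}(|tv|\,|\xi|)\lesssim P_{\alpha,\beta_2}(C|v|\,t/s)$. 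Every monomial of $P_{\alpha,\beta_2}$ has positive degree and $t/s\leq 1$, so $(C|v|\,t/s)^j\leq C^j|v|^j\,(t/s)$; summing over $\beta_1+\beta_2=\beta$ and absorbing the constants into $P_{\alpha,\beta}$ (monotone in the order of differentiation) gives $|\partial^\alpha_x\partial^\beta_\xi a_{s,t}(x,\xi)|\lesssim P_{\alpha,\beta}(|v|)\,(t/s)=P_{\alpha,\beta}(|v|)\min(s/t,t/s)$, which is \eqref{eq:key}.

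Given Lemmas \ref{lemaA}, \ref{lemaB} and \ref{lem1}, none of the individual estimates is hard; the only delicate point is the bookkeeping of powers of $s$, $|\xi|$ and $|v|$ on the correct frequency shell. The conceptual core --- and the reason the lemma is true at all --- is the observation in the first paragraph that the two frequency cut-offs are supported on incompatible dyadic shells unless $s\gtrsim t$; together with the first-order vanishing of $\sigma^v_{I\!\!I}$ at $t=0$ (encoded in Lemma \ref{lemaB}), this is exactly what manufactures the almost-orthogonality gain $\min(s/t,t/s)$ that is presumably summed in a Cotlar--Stein or Schur-type argument further on.
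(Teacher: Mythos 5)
Your proof is correct and follows essentially the same strategy as the paper: first observe that the product of cut-offs $\widehat{\psi}(s\xi)\widehat{\theta}(t\nabla_x\phase_1(x,\xi))$ vanishes when $t>s$ (so the bound is trivial there), and then, on the remaining range $s\geq t$, use the support constraint $5\leq|\xi|\leq 20/(C_1 s)$ together with Lemma~\ref{lem1} to extract the gain $t/s$ from each positive power of $|tv|\,|\xi|$. The only difference is one of bookkeeping: you carry out the Leibniz expansion explicitly and record that $s\leq 4/C_1$ on the relevant support so that $s^{|\beta_1|}|(\partial^{\beta_1}\widehat{\psi})(s\xi)|\lesssim 1$, details the paper leaves to the reader with the closing phrase ``by Lemma~\ref{lem1}.''
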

\begin{proof}
Observe that $a_{s,t}$ is supported in
 \[
    D_{s,t}=\set{\frac{5}{s C_1}\leq\abs{\xi}\leq \frac{20}{s C_1}}\cap \set{\abs{\xi}\leq \frac{5}{t C_1}}\cap \set{\abs{\xi}\geq 1}.
 \]
Then if $t\geq s$ then $\widehat{\psi}(s\xi)\widehat{\theta}(t\nabla_x\phase_1 (x,\xi))=0$ and then, \eqref{eq:key} trivially holds. For $s>t$ and $\xi \in D_{s,t}$,
\[
    \abs{\d^\alpha_\xi \d^\beta_x \sigma^{tv}(x,\xi)}\lesssim \sum_{j=1}^{\min(\abs{\beta}+\abs{\alpha},1)}\brkt{\abs{tv}\abs{\xi}}^j\lesssim \sum_{j=1}^{\min(\abs{\beta}+\abs{\alpha},1)}\brkt{\frac{t\abs{v}}{s}}^j
\lesssim \frac{t}{s}\sum_{j=1}^{\min(\abs{\beta}+\abs{\alpha},1)}{\abs{v}}^j,
\]
by Lemma \ref{lem1}.
\end{proof}

\begin{prop}\label{prop:2} Let $\{V^v_{t,I\!\!I}\}_{0<t\leq 1,v\in \R^n}$ be the family of operators defined by
\[
	V^v_{t,I\!\!I}f(x)=\int \mu(\xi)\widehat{\theta}(t\nabla_x\phase_1 (x,\xi))\sigma^{tv}_{I\!\!I}(x,\xi)\widehat{f}(\xi)e^{i\phase_1 (x,\xi)} \ddd\xi.
\]
There is a polynomial $\mathcal{P}$ such that for any $v\in\R^n$ and any $f\in L^2$,
\[
    \brkt{\int_0^1 \norm{V^v_{t,I\!\!I} f}^{2}_{L^2}\frac{\dd t}{t}}^{\frac 1 2}\lesssim \mathcal{P}(\abs{v}) \norm{f}_{L^2}.
\]
\end{prop}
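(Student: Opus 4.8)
The plan is to convert the off-diagonal decay $\min(s/t,t/s)$ provided by Lemma \ref{lem2} into the desired quadratic bound by a Schur-test (almost-orthogonality) argument. Let $\psi$ be the function furnished by Lemma \ref{lem2}, which we may take real-valued and radial, and write $Q_s g = \psi_s * g$ with $\psi_s(x) = s^{-n}\psi(x/s)$. Then Plancherel's theorem gives, on the one hand, the Calder\'on reproducing formula $g = \int_0^\infty Q_s^{2} g\,\frac{\dd s}{s}$ with convergence in $L^2$, and on the other hand the identity $\int_0^\infty \norm{Q_s g}_{L^2}^2\,\frac{\dd s}{s} = \norm{g}_{L^2}^2$, both being consequences of the normalisation $\int_0^\infty|\widehat{\psi}(s\xi)|^2\frac{\dd s}{s} = 1$.

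The key observation is that $V^v_{t,I\!\!I}Q_s$ is precisely the Fourier integral operator with phase $\phase_1$ and amplitude $a_{s,t}(x,\xi) = \mu(\xi)\widehat{\psi}(s\xi)\widehat{\theta}(t\nabla_x\phase_1(x,\xi))\sigma^{v}_{I\!\!I}(x,t\xi)$ studied in Lemma \ref{lem2} (using $\sigma^{tv}_{I\!\!I}(x,\xi) = \sigma^{v}_{I\!\!I}(x,t\xi)$). Since $a_{s,t}$ is compactly supported in $\xi$ and, by \eqref{eq:key}, lies in $S^0_{0,0}(n,1)$ with semi-norms $\lesssim \mathcal{P}_1(\abs{v})\min(s/t,t/s)$ for a polynomial $\mathcal{P}_1$, Theorem \ref{DW226} gives
\[
	\norm{V^v_{t,I\!\!I}Q_s}_{L^2\to L^2} \lesssim \mathcal{P}_1(\abs{v})\min\!\brkt{\frac{s}{t},\frac{t}{s}}, \qquad 0 < t \leq 1,\ s > 0,
\]
and the support analysis of $a_{s,t}$ carried out in the proof of Lemma \ref{lem2} shows moreover that $V^v_{t,I\!\!I}Q_s = 0$ whenever $t > s$. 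Since $V^v_{t,I\!\!I}$ is itself bounded on $L^2$ (Lemma \ref{lem1} with Theorem \ref{DW226}), we may apply it to the reproducing formula and interchange it with the integral — this is legitimate because of the $L^2$-boundedness of $V^v_{t,I\!\!I}$ together with the $L^2$-convergence of $\int_0^\infty Q_s^{2}f\,\frac{\dd s}{s}$ — obtaining $V^v_{t,I\!\!I}f = \int_0^\infty (V^v_{t,I\!\!I}Q_s)(Q_sf)\,\frac{\dd s}{s}$, so that Minkowski's integral inequality yields
\[
	\norm{V^v_{t,I\!\!I}f}_{L^2} \lesssim \mathcal{P}_1(\abs{v})\int_0^\infty \min\!\brkt{\frac{s}{t},\frac{t}{s}}\,\norm{Q_sf}_{L^2}\,\frac{\dd s}{s}.
\]

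To finish, square this, integrate against $\frac{\dd t}{t}$ over $(0,1)$, and apply Schur's test to the kernel $\kappa(s,t) = \min(s/t,t/s)$ with respect to the measures $\frac{\dd s}{s}$ and $\frac{\dd t}{t}$; this is admissible because $\int_0^\infty \min(u,1/u)\,\frac{\dd u}{u} = 2$, so that $\sup_{0<t\leq1}\int_0^\infty \kappa(s,t)\frac{\dd s}{s}$ and $\sup_{s>0}\int_0^1 \kappa(s,t)\frac{\dd t}{t}$ are both finite. Combining this with the identity $\int_0^\infty\norm{Q_sf}_{L^2}^2\frac{\dd s}{s} = \norm{f}_{L^2}^2$ gives $\int_0^1\norm{V^v_{t,I\!\!I}f}_{L^2}^2\,\frac{\dd t}{t} \lesssim \mathcal{P}_1(\abs{v})^2\norm{f}_{L^2}^2$, and taking square roots proves the proposition with $\mathcal{P} = C\mathcal{P}_1$. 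I expect no serious obstacle in this final step: all of the genuine analytic content, namely the simultaneous extraction of the off-diagonal factor $\min(s/t,t/s)$ and of at most polynomial growth in $v$, has already been isolated in Lemmas \ref{lem1} and \ref{lem2}, and what remains is a routine almost-orthogonality summation, the only delicate point being the interchange of $V^v_{t,I\!\!I}$ with the Calder\'on integral, which was addressed above.
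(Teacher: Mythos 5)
Your proposal is correct and follows essentially the same route as the paper: both establish the uniform $L^2$ bound from Lemma \ref{lem1} and the off-diagonal bound $\norm{V^v_{t,I\!\!I}Q_s}_{L^2\to L^2}\lesssim \mathcal{P}(\abs{v})\min(s/t,t/s)$ from Lemma \ref{lem2} via Theorem \ref{DW226}, and then sum. The only difference is cosmetic: where the paper invokes Corollary 8.6.4 of \cite{G} as a black box, you unpack that corollary's proof (Calder\'on reproducing formula plus Schur's test), which is a perfectly valid and slightly more self-contained presentation of the same almost-orthogonality argument.
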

\begin{proof} Theorem \ref{DW226}, Lemma \ref{lem1} and Lemma \ref{lem2} yield that there exist two polynomials $\mathcal{P}_1$ and $\mathcal{P}_2$ such that, for any $v\in\R^n$,
\[
	\sup_{0<t\leq 1}\norm{V^v_{t,I\!\!I}}_{L^2\to L^2}\lesssim \mathcal{P}_1(\abs{v})
\]
and, for $0<t\leq 1$, $0<s<\infty$,
\[
	\norm{V^v_{t,I\!\!I}Q_s}_{L^2\to L^2}\lesssim \mathcal{P}_2(\abs{v}) \min\brkt{\frac{s}{t},\frac{t}{s}},
\]
where $Q_s$ denotes the convolution operator with kernel $\psi_s(x)=s^{-n}\psi(x/s)$, where $\psi$ satisfies the conditions in Lemma \ref{lem2}. Hence, defining $V^v_{t,I\!\!I}=0$ for $t>1$, we can apply Corollary 8.6.4 in \cite{G} to conclude that
\[
    \brkt{\int_0^1 \norm{V^v_{t,I\!\!I} f}^{2}_{L^2}\frac{\dd t}{t}}^{\frac 1 2}\lesssim \brkt{\mathcal{P}_1(\abs{v})+\mathcal{P}_2(\abs{v})} \norm{f}_{L^2}.
\]
\end{proof}
\parindent 0pt

\end{document}